\newtheorem{theorem}{Theorem}[section]
\newtheorem{corollary}[theorem]{Corollary}
\newtheorem{proposition}[theorem]{Proposition}
\theoremstyle{definition}
\newtheorem{definition}[theorem]{Definition}
\newtheorem{example}[theorem]{Example}
\theoremstyle{remark}
\numberwithin{equation}{section}
\begin{document}

\title[Homeomorphisms that preserve multiplicity and tangent cones]{Some classes of homeomorphisms that preserve multiplicity and tangent cones}


\author[J. Edson Sampaio]{J. Edson Sampaio}
\address[J. Edson Sampaio]{(1) Departamento de Matem\'atica, Universidade Federal do Cear\'a,
	      Rua Campus do Pici, s/n, Bloco 914, Pici, 60440-900, 
	      Fortaleza-CE, Brazil. E-mail: {\tt edsonsampaio@mat.ufc.br} \newline 
	      (2) BCAM - Basque Center for Applied Mathematics,
	      Mazarredo, 14 E48009 Bilbao, Basque Country - Spain.   
	      E-mail: {\tt esampaio@bcamath.org}
}
\thanks{The author was supported by the ERCEA 615655 NMST Consolidator Grant and also by the Basque Government through the BERC 2018-2021 program and by the Spanish Ministry of Science, Innovation and Universities: BCAM Severo Ochoa accreditation SEV-2017-0718.
}
\dedicatory{Dedicated to Professor L\^e D\~ung Tr\'ang on the occasion of his 70th birthday}

\keywords{Multiplicity - Tangent cones - Zariski's Problems}
\subjclass[2010]{Primary MSC 14B05 - MSC 32S50}

\date{}

\begin{abstract}
In this paper we present some applications of A'Campo-L\^e's Theorem and we study some relations between Zariski's Questions A and B. It is presented some classes of homeomorphisms that preserve multiplicity and tangent cones of complex analytic sets. Moreover, we present a class of homeomorphisms that has the multiplicity as an invariant when we consider right equivalence and this class contains many known classes of homeomorphisms that preserve tangent cones. In particular, we present some effective approaches to Zariski's Question A. We show a version of these results looking at infinity. Additionally, we present some results related with Nash modification and Lipschitz Geometry. 
\end{abstract}

\maketitle

\section{Introduction}

In 1973, N. A'Campo in \cite{Acampo:1973} and L\^e D. T. in \cite{Le:1973} proved separately the following result.
\begin{theorem}[A'Campo-L\^e's Theorem]
Let $f,g\colon(\mathbb{C}^n,0)\to (\mathbb{C},0)$ be two complex analytic functions. Suppose that $V(f)$ is smooth at 0. If there is a homeomorphism $\varphi\colon(\mathbb{C}^n,V(f),0)\to (\mathbb{C}^n,V(g),0)$, then $V(g)$ is also smooth at 0.
\end{theorem}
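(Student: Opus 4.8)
The plan is to encode smoothness of $V(f)$ at $0$ entirely in the \emph{local topology of the complement}, so that it is transported automatically by $\varphi$. Fix a Milnor ball $B_\epsilon$ and let $K_f=V(f)\cap S_\epsilon$ be the link of $V(f)$ at $0$. The core of the argument is the equivalence
\[
V(f)\ \text{is smooth at}\ 0 \iff S_\epsilon\setminus K_f\ \text{is homotopy equivalent to}\ S^1 .
\]
Granting this, the theorem is immediate: by the conical structure of analytic germs, $S_\epsilon\setminus K_f\simeq B_\epsilon\setminus V(f)$ as germs at $0$, and restricting $\varphi$ to the complements gives, after a routine cofinality argument on shrinking balls, a homotopy equivalence $S_\epsilon\setminus K_f\simeq S_{\epsilon'}\setminus K_g$; applying the equivalence above to $g$ then yields that $V(g)$ is smooth at $0$.

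The implication ``$\Rightarrow$'' is elementary: if $V(f)$ is smooth then after an analytic change of coordinates $(\mathbb{C}^n,V(f),0)=(\mathbb{C}^n,\{z_1=0\},0)$, so $K_f$ is an unknotted $(2n-3)$-sphere in $S_\epsilon\cong S^{2n-1}$ and $S_\epsilon\setminus K_f\simeq S^1$.

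For the converse I would use the Milnor fibration $f/|f|\colon S_\epsilon\setminus K_f\to S^1$ with fibre the Milnor fibre $F$ of $f$. Replacing $f$ by its reduction changes neither $V(f)$, $K_f$, nor the complement, so assume $f$ reduced; then $F$ is connected (since $f$ is reduced and $n\ge 2$). Suppose $S_\epsilon\setminus K_f\simeq S^1$. From the homotopy exact sequence of the fibration, $\pi_1(S_\epsilon\setminus K_f)\cong\mathbb{Z}$ maps onto $\pi_1(S^1)=\mathbb{Z}$ (surjectivity because $F$ is connected), hence isomorphically, so $\pi_1(F)=0$; the pullback of $\mathbb{R}\to S^1$ is then the universal cover of $S_\epsilon\setminus K_f$, is homotopy equivalent to $F$ (contractible base), and is contractible (since $S_\epsilon\setminus K_f\simeq S^1$). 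Hence $F$ is contractible. If $0$ is an isolated singular point of $V(f)$ this already forces $\mu_f=\operatorname{rank}H_{n-1}(F)=0$, i.e. $df(0)\ne 0$ and $V(f)$ smooth. In general one invokes A'Campo's theorem --- the technical heart of \cite{Acampo:1973}, also obtained in \cite{Le:1973} --- that the Lefschetz number $\Lambda(h)$ of the monodromy $h\colon F\to F$ vanishes whenever $0$ is a critical point of $f$: since $F$ is contractible, $\Lambda(h)=1\ne 0$, so $df(0)\ne 0$ and $V(f)$ is smooth at $0$.

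The main obstacle is precisely this last step in the non-isolated case: the homotopy-theoretic part only delivers contractibility of $F$, and excluding a genuinely singular point with contractible Milnor fibre is not formal --- it is exactly where A'Campo's resolution-of-singularities computation of $\Lambda(h)$ (or Lê's polar-curve methods) does the real work. The remaining points --- the conical-structure reductions and the fact that a homeomorphism of ambient germs carrying $V(f)$ onto $V(g)$ induces a homotopy equivalence of link complements --- are routine.
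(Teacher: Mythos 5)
The paper states this theorem without proof, citing \cite{Acampo:1973} and \cite{Le:1973}, and your argument is essentially the standard derivation found in those sources: the local complement of a smooth hypersurface germ is homotopy equivalent to $S^1$, this homotopy type is a topological invariant of the germ of the pair, and the Milnor fibration together with A'Campo's vanishing theorem for the Lefschetz number of the monodromy excludes a singular point whose complement is $\simeq S^1$ (since a contractible fibre forces $\Lambda(h)=1$). The proposal is correct as a reduction to that Lefschetz-number theorem, which is the genuine content of the cited papers and which you rightly import as a black box; the only presentational quibble is that the homotopy exact sequence by itself yields only that $\pi_1(F)\to\pi_1(E)$ is trivial rather than $\pi_1(F)=0$, but your subsequent identification of the pullback over $\mathbb{R}\to S^1$ with the universal cover of $E\simeq S^1$ establishes contractibility of $F$ without needing that claim, so the argument stands.
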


The main aim of this paper is to give some applications of this theorem to Zariski multiplicity question and to Lipschitz Geometry (see Section \ref{main_results}). Thus, we are going to describe these applications.

Let $f\colon (\mathbb{C}^n,0)\to (\mathbb{C},0)$ be the germ of a reduced holomorphic function at the origin. We recall that \emph{the multiplicity of} $V(f)$ at the origin, denoted by $m(V(f),0)$, is defined as follows: we write
$$f=f_m+f_{m+1}+\cdots+f_k+\cdots$$ where each $f_k$ is a homogeneous polynomial of degree $k$ and $f_m\neq 0$. Then, $m(V(f),0):= m.$
In this case, the tangent cone of $V(f)$ at the origin is $C(V(f),0)=V(f_m)$.

In 1971, O. Zariski in \cite{Zariski:1971} proposes many questions and the most known among them is the following.
\begin{enumerate}[leftmargin=0pt]
\item[]{\bf Question A.} Let $f,g\colon(\mathbb{C}^n,0)\to (\mathbb{C},0)$ be two complex analytic functions. If there is a homeomorphism $\varphi\colon(\mathbb{C}^n,V(f),0)\to (\mathbb{C}^n,V(g),0)$, is it true that $m(V(f),0)=m(V(g),0)$?
\end{enumerate}
In order to know more about this question, you can see \cite{Eyral:2007}.

Let us remark that since $V(f)$ is smooth at 0 if and only if $m(V(f),0)=1$, then A'Campo-L\^e's Theorem gives a positive answer to Question A when $m(V(f),0)=1$. 

Let $Bl_0(\mathbb{C}^n)=\{(x,[v])\in \mathbb{C}^n\times \mathbb{C}P^{n-1};x\wedge v=0\}$ and $\beta\colon Bl_0(\mathbb{C}^n)\to \mathbb{C}^n$ be the projection onto $\mathbb{C}^n$, where $x\wedge v=0$ means that there exists $\lambda \in \mathbb{C}$ such that $x=\lambda v$. If $f\colon(\mathbb{C}^n,0)\to (\mathbb{C},0)$ is a complex analytic function, we define $Bl_0(V(f))=\overline{\beta^{-1}(V(f)\setminus \{0\})}$ and $E_{0}(f)=Bl_0(V(f))\cap (\{0\}\times \mathbb{C}P^{n-1})$. 
Remark that $E_{0}(f)=\{0\}\times \mathbb{P}C(V(f),0)$, where $\mathbb{P}C(V(f),0)$ is the projectivized tangent cone of $V(f)$.

Thus, another question asked in \cite{Zariski:1971} was the following.
\begin{enumerate}[leftmargin=0pt]
\item[]{\bf Question B.} Let $f,g\colon(\mathbb{C}^n,0)\to (\mathbb{C},0)$ be two complex analytic functions. If there is a homeomorphism $\varphi\colon(\mathbb{C}^n,V(f),0)\to (\mathbb{C}^n,V(g),0)$, is there a homeomorphism $h\colon E_{0}(f)\to E_{0}(g)$ such that for each $p\in E_{0}(f)$ the germs $(Bl_0(V(f)), p)$ and $(Bl_0(V(g)),h(p))$ are homeomorphic?  
\end{enumerate}

 We would like to consider also the following versions of Questions A and B:
\begin{enumerate}[leftmargin=0pt]
\item [] Let $f,g\colon(\mathbb{C}^n,0)\to (\mathbb{C},0)$ be two reduced complex analytic functions. Suppose that there is a homeomorphism $\varphi\colon(\mathbb{C}^n,0)\to (\mathbb{C}^n,0)$ such that $f=g\circ \varphi$.\\
\item[]{\bf Question A'.} Is it true that $m(V(f),0)=m(V(g),0)$?\\
\item[]{\bf Question B'.} Suppose that $C(V(f),0)$ or $C(V(g),0)$ is a linear subspace. Is there a homeomorphism between $(C(V(f),0),0)$ and $(C(V(g),0),0)$ or between $E_{0}(f)$ and $E_{0}(g)$?
\end{enumerate}
 
Let me remark that by using Corollary in \cite{King:1978} together with Corollary 2 in \cite{Saeki:1989}, we obtain that, in the case of functions with isolated singularities, Question A' is equivalent to Question A. Moreover, even in the case of isolated singularities, Question A is still an open problem. However, J. Fern\'andez de Bobadilla in \cite{Bobadilla:2005} showed that Question B has a negative answer and in \cite{Bartolo:2010} the authors presented complex analytic functions $f,g\colon(\mathbb{C}^3,0)\to (\mathbb{C},0)$ with isolated singularities such that there is a homeomorphism $\varphi\colon(\mathbb{C}^3,0)\to (\mathbb{C}^3,0)$ satisfying $f=g\circ \varphi$, but there is no homeomorphism $\tilde h\colon(\mathbb{C}^3,0)\to (\mathbb{C}^3,0)$ such that $\tilde h(C(V(f),0))=C(V(g),0)$. In \cite{Bartolo:2010}, the authors used A'Campo-L\^e's Theorem to obtain that a such $\tilde h$ preserves the singular points of the tangent cones and, then, arrived in a contradiction. However, a homeomorphism $h\colon (C(V(f),0),0)\to (C(V(g),0),0)$ does not need to preserve singular points when $h$ is not a restriction of a homeomorphism $\tilde h\colon(\mathbb{C}^3,0)\to (\mathbb{C}^3,0)$. Here, we show that the example presented in \cite{Bartolo:2010} still gives a negative answer to Question B' when we do not require that $C(V(f),0)$ or $C(V(g),0)$ need to be a linear subspace (see Proposition \ref{ex_rel_inv}), though that the Question B' has a positive answer when $n=2$.

As a first application of A'Campo-L\^e's Theorem, we show that Questions A' and B' are related. In fact, we prove by using A'Campo-L\^e's Theorem the following.

\begin{theorem}\label{B_implies_A}
If Question B' has a positive answer then Question A' has a positive answer as well.
\end{theorem}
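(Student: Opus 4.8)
The plan is to reduce everything, by a suspension, to a single application of the positive answer to Question B' in which one of the two tangent cones is linear, and then to extract a contradiction from the fact that the other tangent cone has degree $\ge 2$ and so cannot carry the topology that B' would force on it.

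First, since $f=g\circ\varphi$ forces $\varphi(V(f))=V(g)$ and $\varphi$ induces homeomorphisms of germs of pairs at every point of $V(f)$, A'Campo--L\^e's Theorem already gives $m(V(f),0)=1\iff m(V(g),0)=1$. Thus, after possibly replacing $\varphi$ by $\varphi^{-1}$, I may assume $2\le m:=m(V(f),0)\le m(V(g),0)$ and try to prove equality, arguing by contradiction that $m<m(V(g),0)$. I may also assume $n\ge 3$, because for $n\le 2$ Question A' is classical (for plane curves the multiplicity is a topological invariant of the embedded germ, and for $n=1$ every reduced germ has multiplicity $1$). Next I pass to $\mathbb{C}^{n+1}$ and put $\tilde f(x,z)=f(x)+z^{m}$, $\tilde g(x,z)=g(x)+z^{m}$. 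Both germs are reduced, and $\tilde f=\tilde g\circ(\varphi\times\mathrm{id}_{\mathbb{C}})$ with $\varphi\times\mathrm{id}_{\mathbb{C}}$ a homeomorphism of $(\mathbb{C}^{n+1},0)$. Since $\mathrm{ord}(g)>m$, the initial form of $\tilde g$ is $z^{m}$, so $C(V(\tilde g),0)=\{z=0\}$ is a linear subspace and $E_0(\tilde g)=\{0\}\times\mathbb{C}P^{n-1}$; the initial form of $\tilde f$ is the (reduced) form $f_m(x)+z^{m}$, so $C(V(\tilde f),0)=V(f_m+z^{m})$ is a cone of degree exactly $m$ and $E_0(\tilde f)=\{0\}\times\mathbb{P}\big(V(f_m+z^{m})\big)$, with $\mathbb{P}\big(V(f_m+z^{m})\big)$ a reduced hypersurface of degree $m$ in $\mathbb{C}P^{n}$. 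Applying the positive answer to Question B' to the pair $(\tilde f,\tilde g)$ — legitimate, as $C(V(\tilde g),0)$ is a linear subspace — I obtain that either the germ $\big(V(f_m+z^{m}),0\big)$ is homeomorphic to $(\mathbb{C}^{n},0)$, or $\mathbb{P}\big(V(f_m+z^{m})\big)$ is homeomorphic to $\mathbb{C}P^{n-1}$.

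The remaining step — which I expect to be the main obstacle — is to exclude both possibilities for $m\ge 2$ and $n\ge 3$; equivalently, to show that a reduced hypersurface of degree $\ge 2$ in $\mathbb{C}P^{n}$ ($n\ge 3$) is never homeomorphic to $\mathbb{C}P^{n-1}$ and that its affine cone is never a topological manifold at the vertex. When $\mathbb{P}(V(f_m))$ is smooth this is easy: $\mathbb{P}\big(V(f_m+z^{m})\big)$ is then a smooth hypersurface of degree $m\ge 2$, whose cohomology ring differs from that of $\mathbb{C}P^{n-1}$, and the link of its affine cone is an $S^{1}$-bundle over it which is not a sphere. The genuinely delicate case is when $\mathbb{P}(V(f_m))$ is singular: then $V(f_m+z^{m})$ is singular along a positive-dimensional cone, and I would rule out both possibilities by a local analysis at these singular points, using A'Campo--L\^e's Theorem (any germ of a pair which is topologically standard is smooth) to control how such singularities can lie inside a space homeomorphic to $\mathbb{C}P^{n-1}$ or to $\mathbb{C}^{n}$, together with homological constraints on a degree-$m$ hypersurface homeomorphic to $\mathbb{C}P^{n-1}$ (these force, in particular, $m$ to be a perfect $(n-1)$-st power relative to the homeomorphism, and the structure of the singular locus must then be confronted). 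Once this is settled one has a contradiction in every case, which proves $m(V(f),0)=m(V(g),0)$, i.e.\ that a positive answer to Question B' yields a positive answer to Question A'.
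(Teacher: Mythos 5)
Your reduction is exactly the one the paper uses: suspend to $\tilde f=f+z^{m}$, $\tilde g=g+z^{m}$ with $m={\rm ord}_0 f<{\rm ord}_0 g$, note that $C(V(\tilde g),0)=\{z=0\}$ is linear while $C(V(\tilde f),0)=V(f_m+z^{m})$ is a non-linear cone, and invoke the positive answer to Question B$'$ to get a homeomorphism either $(V(f_m+z^{m}),0)\cong(\mathbb{C}^{n},0)$ or $\mathbb{P}V(f_m+z^{m})\cong\mathbb{C}P^{n-1}$. Up to that point the argument is correct (including the check that the suspensions are reduced). But the step you yourself flag as ``the main obstacle'' is a genuine gap, not a detail: you never prove that a non-linear cone cannot be homeomorphic to $\mathbb{C}^{n}$ at the vertex, nor that a singular degree-$m$ hypersurface of $\mathbb{C}P^{n}$ cannot be homeomorphic to $\mathbb{C}P^{n-1}$. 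Your sketch covers only the case where $\mathbb{P}V(f_m)$ is smooth; for the singular case you offer a programme (``local analysis \dots homological constraints \dots the structure of the singular locus must then be confronted'') rather than an argument, and the one concrete constraint you extract ($m$ a perfect $(n-1)$-st power) does not by itself yield a contradiction. As written, the proof is incomplete precisely where the real content lies.

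The paper closes this gap with two citations rather than ad hoc topology: case 1 is killed by Prill's theorem (\emph{Cones in complex affine space are topologically singular}), which says that a complex cone whose germ at the vertex is homeomorphic to $(\mathbb{C}^{k},0)$ must be a linear subspace; case 2 is killed by Corollary 2.12 of Dimca's book, which says that a projective variety homeomorphic to $\mathbb{C}P^{n-1}$ is a linear subspace \emph{provided its dimension is not} $2$. Note that this dimensional exception bites exactly in your setting: you reduce to $n\ge 3$, and for $n=3$ the exceptional divisor $\mathbb{P}V(f_m+z^{m})$ has dimension $2$, so the projective-space recognition result is not available. The paper escapes by performing a second suspension (with exponent $k+1$, to keep the order of $g$ strictly larger) so as to raise the dimension out of the exceptional range; your argument would need either this extra suspension or an independent treatment of singular surfaces in $\mathbb{C}P^{3}$ homeomorphic to $\mathbb{C}P^{2}$. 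Incorporating Prill's theorem, Dimca's corollary, and the second suspension would turn your outline into the paper's proof.
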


Another problem related with the above questions is to know if the relative multiplicities are topological invariants. To be more precise, let us define which are the relative multiplicities. Let $f\colon (\mathbb{C}^n,0)\to (\mathbb{C},0)$ be a reduced complex analytic function and 
$$f=f_m+f_{m+1}+\cdots+f_k+\cdots$$ 
is as before. We have the decomposition of $f_m$ in irreducible polynomials
$$f_m=h_1^{k_1}\cdots h_r^{k_r}.$$ 
Thus, we define the relative multiplicity of $V(f)$ (along of $V(h_i)$) by $k_{V(f)}(V(h_i))\colon =k_i$. In particular, we obtain that $m(V(f),0)=\sum k_{V(f)}(V(h_i)) m(V(h_i),0)$.

\begin{enumerate}[leftmargin=0pt]
\item[] Let $f,g\colon (\mathbb{C}^n,0)\to (\mathbb{C},0)$ be two reduced complex analytic functions. Let $X_1,...,X_r$ (resp. $Y_1,...,Y_s$) be the irreducible components of $C(V(f),0)$ (resp. $C(V(g),0)$). Suppose that there is a homeomorphism $\varphi\colon(\mathbb{C}^n,0)\to (\mathbb{C}^n,0)$ such that $f=g\circ \varphi$.\\
\item[]{\bf Question B$_{Rel}$.} Is there a bijection $\sigma\colon \{1,...,r\}\to \{1,...,s\}$ such that $k_{V(f)}(X_i)=k_{V(g)}(Y_{\sigma(i)})$, for all $i\in\{1,...,r\}$?\\
\item[]{\bf Question B'$_{Rel}$.} Suppose that $C(V(f),0)$ or $C(V(g),0)$ is a linear subspace. Is it true that $r=s=1$ and $k_{V(f)}(X_1)=k_{V(g)}(Y_{1})$?
\end{enumerate}
Question B$_{Rel}$ was also asked in January 2017 on Mathoverflow.net \cite{user103279} and it already was answered in many particular cases, for example, the answer is known to be yes in the following special cases:
\begin{itemize}
\item if $n=2$ in \cite{Zariski:1932};
\item if $\varphi$ is a real analytic diffeomorphism in \cite{KurdykaR:1989};
\item if $\varphi$ is a subanalytic bi-Lipschitz homeomorphism in \cite{Valette:2010};
\item if $\varphi$ is a bi-Lipschitz homeomorphism in \cite{FernandesS:2016};
\item if $\varphi$ is a blow-spherical homeomorphism in \cite{Sampaio:2017}.
\end{itemize}
So, as a second application of A'Campo-L\^e's Theorem, we show the following.

\begin{theorem}\label{relative_cor}
If Question B'$_{Rel}$ has a positive answer then Question A' also has a positive answer.
\end{theorem}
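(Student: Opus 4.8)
The plan is to mimic the strategy that proves Theorem \ref{B_implies_A}, but now carrying along the relative-multiplicity data so that the stronger hypothesis (Question B$'_{Rel}$) can be fed in. Suppose Question B$'_{Rel}$ has a positive answer and let $f,g\colon(\mathbb{C}^n,0)\to(\mathbb{C},0)$ be reduced holomorphic germs with a homeomorphism $\varphi\colon(\mathbb{C}^n,0)\to(\mathbb{C}^n,0)$ such that $f=g\circ\varphi$. Write $f=f_m+f_{m+1}+\cdots$ and $g=g_p+g_{p+1}+\cdots$ with $f_m,g_p\neq 0$, so that $m(V(f),0)=m$, $m(V(g),0)=p$, $C(V(f),0)=V(f_m)$ and $C(V(g),0)=V(g_p)$. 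We must show $m=p$. Arguing by contradiction, assume (without loss of generality) that $m<p$.

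The key idea is to produce an auxiliary pair of germs whose tangent cones are genuinely comparable via a homeomorphism of the ambient space, using A'Campo--L\^e's Theorem as the rigidity input, and to which the hypothesis applies because one side will have linear tangent cone. First I would consider a generic linear form $\ell\colon\mathbb{C}^n\to\mathbb{C}$ and the germs $F=f\cdot \ell^{\,p-m}$ and $G=g$. Then $F$ and $G$ have the same multiplicity $p$ at the origin; moreover, since $\varphi$ is a homeomorphism of $(\mathbb{C}^n,0)$, one checks that $V(F)$ and $V(G)$ are both honest complex hypersurface germs and one can still relate them topologically (the extra linear factor only adds a smooth hyperplane component on the $f$-side, whose image under $\varphi$ can be inserted into the combinatorial picture). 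The point of the $\ell^{\,p-m}$ factor is to force equal multiplicities so that Question B$'_{Rel}$ becomes applicable once we detect that one of the two tangent cones is linear: indeed, after passing to the blowup $\beta\colon Bl_0(\mathbb{C}^n)\to\mathbb{C}^n$, the homeomorphism $\varphi$ lifts (after the usual Question~A-style reductions explained before Theorem \ref{B_implies_A}) to a homeomorphism matching the exceptional divisors $E_0(F)$ and $E_0(G)$, and A'Campo--L\^e's Theorem guarantees that smooth points of $Bl_0(V(F))$ along $E_0(F)$ correspond to smooth points of $Bl_0(V(G))$ along $E_0(G)$. Along the components of the tangent cone this is exactly the mechanism that, in the presence of a linear component, lets one invoke Question B$'_{Rel}$ to conclude $r=s=1$ with matching relative multiplicities — which in turn, via the formula $m(V(f),0)=\sum_i k_{V(f)}(X_i)\,m(X_i,0)$, is incompatible with $m<p$.

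Concretely, I would run the following steps. \emph{Step 1:} reduce to the situation where the tangent cone of one side is a hyperplane, by composing with the multiplication trick above and invoking the equivalence (recalled in the introduction via \cite{King:1978} and \cite{Saeki:1989}) between the $V(f)$-picture and the right-equivalence picture, so that the hypothesis of Question B$'_{Rel}$ is literally met. \emph{Step 2:} use A'Campo--L\^e's Theorem on the blown-up spaces to transport the tangent-cone decomposition across $\varphi$; this is the step already carried out for Theorem \ref{B_implies_A}, and here I only need to keep track of the multiplicities $k_i$ attached to each component, noting that the exponents in $f_m=h_1^{k_1}\cdots h_r^{k_r}$ are read off from the local structure of $Bl_0(V(f))$ near generic points of each $E_0$-component, which is a topological datum preserved by the lifted homeomorphism. \emph{Step 3:} apply the positive answer to Question B$'_{Rel}$ to get $r=s=1$ and $k_{V(F)}(X_1)=k_{V(G)}(Y_1)$, then expand $m(V(F),0)=k_{V(F)}(X_1)m(X_1,0)$ and $m(V(G),0)=k_{V(G)}(Y_1)m(Y_1,0)$ and compare degrees to force $m(V(f),0)=m(V(g),0)$, contradicting $m<p$ and thus establishing Question A$'$.

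The main obstacle I anticipate is \emph{Step 2}: making precise the claim that the relative multiplicity $k_{V(f)}(X_i)$ — an algebraic multiplicity of a component of the tangent cone — is recovered from purely topological data of the germ $(Bl_0(V(f)),p)$ at generic points $p\in E_0(f)$, in a way that is transported by the homeomorphism lifted from $\varphi$. One has to be careful that the lift of $\varphi$ to the blowups exists and is a homeomorphism of the relevant germs (this uses the smoothness-detecting force of A'Campo--L\^e's Theorem exactly as in \cite{Bartolo:2010}), and that the local topological type at a generic point of a component of the exceptional divisor indeed determines the corresponding $k_i$ — for a generic point, $(Bl_0(V(f)),p)$ looks like the product of a smooth factor with the plane curve germ $\{y^{k_i}=0\}$ up to homeomorphism, but justifying that the multiplicity $k_i$ of this transverse slice is a topological invariant is delicate and is really where the $n=2$ Zariski theorem \cite{Zariski:1932} (or an equivalent local argument) enters. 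Once that transfer is in hand, Steps 1 and 3 are bookkeeping.
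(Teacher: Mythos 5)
Your overall strategy --- reduce to a situation where one tangent cone is linear so that Question B$'_{Rel}$ applies, then use the identity $m(V(f),0)=\sum_i k_{V(f)}(X_i)\,m(X_i,0)$ to force a contradiction with $\mathrm{ord}_0 f<\mathrm{ord}_0 g$ --- is the right one, but the reduction you propose does not work, and the machinery you lean on in Step~2 is not available. First, multiplying by $\ell^{\,p-m}$ never puts you in the hypotheses of Question B$'_{Rel}$: that question requires a homeomorphism $\psi$ with $F=G\circ\psi$ (which $\varphi$ does not provide once you have altered $f$ by an extra factor; $\varphi(V(\ell^{\,p-m}))$ is just some topological set, not the zero set of anything), and it requires $C(V(F),0)$ or $C(V(G),0)$ to be a \emph{linear subspace}, whereas $C(V(F),0)=V(f_m\cdot\ell^{\,p-m})$ is a union of $V(f_m)$ with a hyperplane and is linear only in degenerate cases. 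Second, and more seriously, your Step~2 --- lifting $\varphi$ to the blowups, matching $E_0(F)$ with $E_0(G)$, and reading off the exponents $k_i$ from the local topology of $Bl_0(V(f))$ at generic points of the exceptional divisor --- is essentially asking for a positive answer to Questions B and B$_{Rel}$. Both are known to fail: Fern\'andez de Bobadilla's counterexample for Question B, and Proposition \ref{ex_rel_inv} of this very paper for Question B$_{Rel}$ (the family $f_t=z^{12}+zy^3x+ty^2x^3+x^6+y^5$ is topologically equisingular yet has jumping relative multiplicities). So no argument can transport the full tangent-cone decomposition with its multiplicities across $\varphi$; the whole point of the hypothesis ``B$'_{Rel}$ has a positive answer'' is that you get to \emph{assume} this transport, but only in the special case where one cone is linear, and the proof must manufacture that special case.

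The device that actually works is a suspension, not a multiplication. Assume $k:=\mathrm{ord}_0 f<\mathrm{ord}_0 g$ (A'Campo--L\^e gives $k>1$, since $k=1$ would force $V(g)$ smooth and hence $\mathrm{ord}_0 g=1$). Set $\tilde f(x,t)=f(x)+t^k$ and $\tilde g(x,t)=g(x)+t^k$ on $(\mathbb{C}^{n+1},0)$, with $\tilde\varphi=\varphi\times\mathrm{id}_{\mathbb{C}}$, so that $\tilde f=\tilde g\circ\tilde\varphi$ automatically. Because $\mathrm{ord}_0 g>k$, the initial form of $\tilde g$ is $t^k$, so $C(V(\tilde g),0)=\{t=0\}$ \emph{is} a hyperplane and $k_{V(\tilde g)}(C(V(\tilde g),0))=k$; meanwhile $C(V(\tilde f),0)=V(f_k+t^k)$ is not a linear subspace since $f_k+t^k$ is not a power of a linear form when $k>1$. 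Now Question B$'_{Rel}$ applies directly and yields that $C(V(\tilde f),0)$ is irreducible with $k_{V(\tilde f)}(C(V(\tilde f),0))=k$; the degree identity $k=\mathrm{ord}_0\tilde f=k\cdot m(C(V(\tilde f),0),0)$ forces $m(C(V(\tilde f),0),0)=1$, so the cone is smooth at $0$ and hence a linear subspace --- a contradiction. Symmetry (replacing $\varphi$ by $\varphi^{-1}$) gives the reverse inequality. Note this also repairs your Step~3: you cannot compare $m(V(F),0)$ and $m(V(G),0)$ from $k_{V(F)}(X_1)=k_{V(G)}(Y_1)$ alone without knowing $m(X_1,0)=m(Y_1,0)$, which is another instance of Question~A; the suspension sidesteps this because the linear side has $m(Y_1,0)=1$ by construction.
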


In Proposition \ref{ex_rel_inv}, we show that Question B$_{Rel}$ has a negative answer.

We present also an application of A'Campo-L\^e's Theorem to Lipschitz Geometry. Let us remind the following.
\begin{definition}\label{lipschitz function}
Let $X\subset\mathbb{R}^n$ and $Y\subset\mathbb{R}^m$. A mapping $f\colon X\rightarrow Y$ is called {\bf Lipschitz} if there exists $\lambda >0$ such that is
$$\|f(x_1)-f(x_2)\|\le \lambda \|x_1-x_2\|, \quad \mbox{for all }x_1,x_2\in X.$$ A Lipschitz mapping $f\colon X\rightarrow Y$ is called
{\bf bi-Lipschitz} if its inverse mapping exists and is Lipschitz. When there exists an bi-Lipschitz mapping $f\colon X\rightarrow Y$ we say that $X$ and $Y$ are bi-Lipschitz homeomorphic.
\end{definition}

In \cite{NaumannS:2011}, the authors presented several notions related with the notion of Lipschitz manifold and among them, they had the following.
\begin{definition}
A subset $M \subset \mathbb{R}^N$ is called a $k$-dimensional {\bf Lipschitz submanifold} if for every $x_0 \in M$ there exist an open set $U \subset \mathbb{R}^N$ and a bi-Lipschitz mapping $\varphi\colon U\to\varphi(U)\subset \mathbb{R}^N$ such that $x_0\in U$, $\varphi(M\cap U)=\{(x_1,...,x_N)\in \varphi(U);\, x_{k+1}=...=x_N=0\}$ and $\varphi(x_0)=0$.
\end{definition}
\begin{definition}
A subset $M \subset \mathbb{R}^N$ is called a $k$-dimensional {\bf parametric Lipschitz manifold} if for every $x_0 \in M$ there exist an open neighborhood $U \subset \mathbb{R}^N$ of $x_0$, an open set $W\subset \mathbb{R}^k$ and a bi-Lipschitz mapping $\varphi\colon W\to M\cap U$.
\end{definition}
Moreover, the authors mention in (\cite{NaumannS:2011}, page 11) that they could not prove or disprove that a parametric Lipschitz manifold is a Lipschitz submanifold. Here, by using A'Campo-L\^e's Theorem, we disprove this.

\begin{proposition}\label{ex_lip}
There are parametric Lipschitz manifolds that are not Lipschitz submanifolds.
\end{proposition}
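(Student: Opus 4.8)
The plan is to build the example inside $\mathbb{R}^4$, starting from the cusp. Let $h\colon(\mathbb{C}^2,0)\to(\mathbb{C},0)$ be $h(x,y)=y^2-x^3$, so that $V(h)$ is singular at $0$ with $m(V(h),0)=2$, and let $L\subset S^3\subset\mathbb{R}^4=\mathbb{C}^2$ be a trefoil knot, smoothly embedded in the unit sphere and isotopic to the link $V(h)\cap S^3_\varepsilon$ of the cusp. Set $M:=\{tp\,:\,t\in[0,1),\ p\in L\}\subset\mathbb{R}^4$, the straight cone over $L$ with apex $0$. By Milnor's local conical structure theorem the germ of pairs $(\mathbb{R}^4,M,0)$ is homeomorphic to $(\mathbb{C}^2,V(h),0)$. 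I will show that $M$ is a parametric Lipschitz $2$-manifold that is not a Lipschitz submanifold of $\mathbb{R}^4$, which proves the proposition.

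First, $M$ is a parametric Lipschitz $2$-manifold. At a point $q\in M\setminus\{0\}$ it is, near $q$, the image of the injective immersion $(t,\theta)\mapsto t\gamma(\theta)$, where $\gamma$ is an arclength parametrisation of $L$ and $t>0$; hence $M$ is a smooth — a fortiori parametric Lipschitz — $2$-submanifold near $q$. Near the apex, the metric induced on the straight cone $M$ by $\mathbb{R}^4$ is bi-Lipschitz to the abstract Euclidean cone over $(L,d_L)$, $d_L$ the intrinsic distance of $L$: indeed $\|tp-t'p'\|^2=(t-t')^2+4tt'\sin^2\!\big(\tfrac12\,d_{S^3}(p,p')\big)$, and on the compact smooth curve $L$ one has $d_{S^3}|_L\asymp d_L$. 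Since $L$ is a circle, say of length $\ell$, this Euclidean cone is $2$-dimensional, and any planar Euclidean cone is bi-Lipschitz to $\mathbb{R}^2$ via $(r,\theta)\mapsto r\big(\cos\tfrac{2\pi}{\ell}\theta,\,\sin\tfrac{2\pi}{\ell}\theta\big)$. Composing, a neighbourhood of $0$ in $M$ is bi-Lipschitz to an open disc of $\mathbb{R}^2$, so $M$ is a parametric Lipschitz $2$-manifold at $0$ as well.

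Second, $M$ is not a Lipschitz submanifold of $\mathbb{R}^4$. If it were one at $0$, there would be a bi-Lipschitz (in particular homeomorphic) germ $\varphi\colon(\mathbb{R}^4,0)\to(\mathbb{R}^4,0)$ with $\varphi(M)=\{x_3=x_4=0\}$ as germs at $0$. Composing the homeomorphism $(\mathbb{C}^2,V(h),0)\to(\mathbb{R}^4,M,0)$ of the first paragraph with $\varphi$, and identifying $\mathbb{R}^4\cong\mathbb{C}^2$ by $(x_1,x_2,x_3,x_4)\leftrightarrow(x_1+ix_2,x_3+ix_4)$, under which $\{x_3=x_4=0\}=V(z_2)$, we would obtain a homeomorphism of germs of pairs $(\mathbb{C}^2,V(h),0)\to(\mathbb{C}^2,V(z_2),0)$. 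Its inverse is a homeomorphism $(\mathbb{C}^2,V(z_2),0)\to(\mathbb{C}^2,V(h),0)$, and $V(z_2)$ is smooth at $0$; by A'Campo--L\^e's Theorem $V(h)$ would then be smooth at $0$, contradicting $m(V(h),0)=2$. Hence $M$ is not a Lipschitz submanifold. (Only the underlying homeomorphism of $\varphi$, not its Lipschitz character, is used: the obstruction is that $M$ is not topologically locally flat at $0$, and A'Campo--L\^e's Theorem is precisely what detects this.)

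The step I expect to need the most care is the bi-Lipschitz comparison in the second paragraph: one must check that replacing the $S^3$-distance on the knot $L$ by its intrinsic distance, and then coning, distorts distances only by a bounded factor — the remaining ingredients (the trefoil as the link of the cusp, Milnor's conical structure theorem, A'Campo--L\^e's Theorem) being classical. The same scheme produces examples in higher dimensions: any isolated hypersurface singularity $V(g)\subset\mathbb{C}^{n+1}$ whose link $L_g$ is a topological sphere yields, through the straight cone over $L_g$, a parametric Lipschitz $2n$-manifold that is not a Lipschitz submanifold; such $V(g)$ exist for $n\ge3$ by Brieskorn's examples (while $n=2$ is excluded by Mumford's theorem), and to see that $L_g$ is bi-Lipschitz to the round sphere one uses that a smooth compact manifold of dimension $\ge5$ homeomorphic to a sphere carries an essentially unique Lipschitz structure.
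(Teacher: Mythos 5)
Your proof is correct and is essentially the paper's argument: the cusp $y^2-x^3$ is precisely the $k=1$ member of the family $x_1^2+\cdots+x_k^2-z^3$ used in the paper, and the obstruction step — compose the conical‑structure homeomorphism with the hypothetical flattening chart to get a homeomorphism of pairs onto $(\mathbb{C}^{k+1},V(z),0)$ and contradict A'Campo--L\^e — is identical. The only (minor) divergence is in producing the bi-Lipschitz chart at the cone point: the paper radially extends a bi-Lipschitz identification of the link with a round sphere, while you pass through the intrinsic Euclidean cone metric over the knotted circle via the identity $\|tp-t'p'\|^2=(t-t')^2+4tt'\sin^2\bigl(\tfrac12 d_{\mathbb{S}^3}(p,p')\bigr)$ and the LNE property of compact smooth submanifolds; the two computations are interchangeable.
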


In \cite{BirbrairG:2018}, the authors presented germs at the origin of real semialgebraic sets (of dimension two) $S_1,S_2\subset \mathbb{R}^3$ such that they are bi-Lipschitz homeomorphic but there is no bi-Lipschitz homeomorphism $\phi\colon (\mathbb{R}^3,0)\to(\mathbb{R}^3,0)$ such that $\phi(S_1)=S_2$.

Finally, in Section \ref{sec:Nash}, we consider one version more of Question B. In fact, we consider a version of Question B by considering Nash modifications instead of the blowing up of the origin. Let us be more precise with this.

If $\mathbb{K}$ is $\mathbb{R}$ or $\mathbb{C}$ and $X\subset \mathbb{K}^n$ is an analytic subset, we denote by ${\rm Sing}(X)$ to be the singular points of $X$ and by ${\rm Reg}(X):=X\setminus {\rm Sing}(X)$ to be the regular points of $X$.
\begin{definition}
Let $X\subset \mathbb{K}^n$ be an analytic subset of pure dimension $r$ and $ G_{r}^{n}(\mathbb{K})$ be the Grassmannian of $r$-planes in $\mathbb{K}^n$. We define a map $\nu \colon{\rm Reg}(X)\rightarrow X\times G_{r}^{n}(\mathbb{K})$, by $\nu (x):=(x,T_x X)$, where $T_x X$ is the tangent space of $X$ at $x$. The closure of the image of $\nu$, denoted by $X'_{Nash}$, is called the {\bf Nash modification of $X$} and the mapping induced by first projection $\eta_X\colon X'_{Nash}\to X$ is called the {\bf Nash mapping of $X$}.
\end{definition}
Thus, we have the following.
\begin{enumerate}[leftmargin=0pt]
\item[]{\bf Question B$_{Nash}$.}  Suppose that two germs of analytic sets $(X,0)$ and $(Y,0)$ are bi-Lipschitz homeomorphic. Is there a homeomorphism between $X'_{Nash}$ and $Y'_{Nash}$ sending $\eta_X^{-1}({\rm Sing}(X))$ over $\eta_Y^{-1}({\rm Sing}(Y))$?
\end{enumerate}
By proving in Proposition \ref{ne_cone} that the cone over a LNE subset of $\mathbb{S}^n$ is also LNE (see definition \ref{definition-ne}), we show the following.
\begin{proposition}\label{ex_Nash}
Question B$_{Nash}$ has a negative answer when we consider real analytic sets.
\end{proposition}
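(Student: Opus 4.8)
The plan is to exhibit an explicit pair of germs of real analytic sets $(X,0)$ and $(Y,0)$ which are bi-Lipschitz homeomorphic, but for which no homeomorphism $X'_{Nash}\to Y'_{Nash}$ can carry $\eta_X^{-1}(\mathrm{Sing}(X))$ onto $\eta_Y^{-1}(\mathrm{Sing}(Y))$. The most economical way to do this is to take one of the two sets to be smooth at the origin — so $Y$ is (a neighbourhood of $0$ in) a linear subspace, say $Y=\mathbb{R}^k\times\{0\}\subset\mathbb{R}^N$ — and $X$ to be a cone over a suitable Lipschitz normally embedded (LNE) subset $L$ of the sphere $\mathbb{S}^{N-1}$ that is diffeomorphic to $\mathbb{S}^{k-1}$ as a topological manifold but is genuinely singular at the cone point. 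Then $X$ and $Y$ are both topological manifolds, hence (if we choose $L$ so that the cone is parametrized bi-Lipschitzly by a ball) bi-Lipschitz homeomorphic; but $\mathrm{Sing}(Y)=\emptyset$ so $\eta_Y^{-1}(\mathrm{Sing}(Y))=\emptyset$, whereas $\mathrm{Sing}(X)\ni 0$ and $\eta_X^{-1}(\mathrm{Sing}(X))\neq\emptyset$, which already obstructs the existence of such a homeomorphism for cardinality/nonemptiness reasons. The role of Proposition \ref{ne_cone} (the cone over an LNE subset of $\mathbb{S}^n$ is LNE) is precisely to guarantee that the chosen $X$ is intrinsically flat enough that the radial/parametric bi-Lipschitz homeomorphism with a ball — equivalently with the smooth model $Y$ — actually exists.

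The steps, in order, would be: (1) fix a concrete LNE real analytic cone $X\subset\mathbb{R}^N$ with isolated singularity at $0$ whose link $L=X\cap\mathbb{S}^{N-1}$ is a topological $(k-1)$-sphere but not smooth at the relevant point — for instance one can recycle the set appearing in \cite{Bartolo:2010} (viewed over $\mathbb{R}$) or a low-dimensional model such as a suitable surface in $\mathbb{R}^3$ whose link is a circle; (2) invoke Proposition \ref{ne_cone} to conclude $X$ is LNE, and then use a standard fact in Lipschitz geometry (LNE plus topological-manifold link $\Rightarrow$ the inner and outer metrics are bi-Lipschitz to the Euclidean cone metric on a disc) to produce a bi-Lipschitz homeomorphism $(X,0)\to(\mathbb{R}^k,0)=(Y,0)$; (3) compute the two Nash modifications: since $Y$ is smooth, $\eta_Y\colon Y'_{Nash}\to Y$ is a homeomorphism and $\eta_Y^{-1}(\mathrm{Sing}(Y))=\emptyset$; since $X$ is singular at $0$, $\mathrm{Sing}(X)\neq\emptyset$, and one checks $\eta_X^{-1}(\mathrm{Sing}(X))\neq\emptyset$ (the Nash mapping is proper and surjective, so the fibre over a singular point is nonempty); (4) conclude that no homeomorphism $X'_{Nash}\to Y'_{Nash}$ can send a nonempty set onto the empty set, so Question B$_{Nash}$ fails.

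The main obstacle is step (2): one must be genuinely careful that $X$ is bi-Lipschitz homeomorphic to $Y$ and not merely homeomorphic, since that is the hypothesis of Question B$_{Nash}$. The point of insisting on LNE is exactly to remove the usual metric obstruction — for a cone over an LNE link with link a topological sphere, the inner metric has no extra "folding", and the identity-type radial map $tx\mapsto t\psi(x)$ (where $\psi$ trivializes the link topologically) can be arranged to be bi-Lipschitz because LNE means the inner and outer metrics on $X$ are equivalent, and on the smooth side there is nothing to obstruct. A secondary technical point is verifying that the link of the chosen $X$ is indeed a topological sphere while $X$ is honestly singular at $0$ (so that $\mathrm{Sing}(X)\neq\emptyset$ in the analytic sense); A'Campo--L\^e's Theorem is what rules out the "cheap" objection that such an $X$ cannot exist — in the real analytic category a topological manifold can be analytically singular, and one exhibits this explicitly. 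Once these two points are in hand, the cardinality argument in steps (3)--(4) is immediate.
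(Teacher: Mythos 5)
Your proposal is correct in outline and reaches the same contradiction as the paper: both arguments exhibit a real analytic germ $(X,0)$ that is analytically singular at $0$ yet bi-Lipschitz homeomorphic to a smooth germ $(\mathbb{R}^k,0)$, and then observe that $\eta_X^{-1}({\rm Sing}(X))\neq\emptyset$ while $\eta_{\mathbb{R}^k}^{-1}({\rm Sing}(\mathbb{R}^k))=\emptyset$, so no homeomorphism of Nash modifications can match these sets. The paper's example is $V=\{z^{2019}=x^{2019}+y^{2019}\}\subset\mathbb{R}^3$, which is exactly the kind of cone-over-a-smooth-link model you describe. Where you genuinely diverge is in how the bi-Lipschitz equivalence with the smooth model is produced. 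The paper proves $V$ is LNE via Proposition \ref{ne_cone}, invokes Birbrair's classification of surface germs (Theorem 8.3 in \cite{Birbrair:2008}) to get an \emph{inner} bi-Lipschitz equivalence with a $\beta$-horn, uses invariance of zero density to force $\beta=1$, uses LNE again to upgrade inner to outer equivalence, and finally identifies $X_1$ with $\mathbb{R}^2$ as a Lipschitz graph. Your route — radially extend a diffeomorphism $\mathbb{S}^{k-1}\to L$ to a map $tx\mapsto t\psi(x)$ — is more elementary and is in fact the very construction the paper uses in its proof of Proposition \ref{ex_lip}; it avoids the classification theorem and the density argument entirely, at the cost of being tied to links that are smooth compact submanifolds. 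It also generalizes more readily to higher-dimensional examples, whereas the paper's argument is specific to surfaces.

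One imprecision worth correcting: the ``standard fact'' you lean on in step (2) — that LNE together with a topological-manifold link forces the cone to be bi-Lipschitz to a Euclidean cone over a disc — is not true in that generality (an LNE link homeomorphic to a sphere need not be bi-Lipschitz to the round sphere once $\dim L\geq 2$). What actually makes your step (2) work is that the link is a \emph{smooth compact} submanifold diffeomorphic to $\mathbb{S}^{k-1}$: a diffeomorphism of compact smooth manifolds is bi-Lipschitz, and a direct computation with $\|t_1x_1-t_2x_2\|^2=(t_1-t_2)^2+t_1t_2\|x_1-x_2\|^2$ shows that the radial extension of a bi-Lipschitz map of links is automatically bi-Lipschitz for the \emph{outer} metric — no LNE hypothesis is needed for this. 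So Proposition \ref{ne_cone} is not actually required on your route (it is essential on the paper's route, to pass from inner to outer equivalence); you should either drop the appeal to it or state precisely the smooth-link hypothesis that replaces it. With that adjustment, and with the observation you already make that ${\rm Sing}(X)\ni 0$ because the tangent cone of $X$ at $0$ is $X$ itself and not a linear subspace, your argument is complete.
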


\section{Some applications of A'Campo-L\^e's Theorem}\label{main_results}

\subsection{Invariance of the multiplicity by right equivalence}\label{subsec:mult}
In this Subsection, among other things we prove that a positive answer to Question B' or B'$_{Rel}$ implies in a positive answer to Question A'.

Let me introduce a new class of homeomorphisms that essentially generalizes bi-Lipschitz equivalence, weak directional equivalence, differentiable equivalence and $C^1$ equivalence.
\begin{definition}
Let $\varphi\colon (\mathbb{C}^n,0)\to (\mathbb{C}^n,0)$ be a homeomorphism.
We say that $\varphi$ preserves linear tangent cones if for any complex analytic functions $f,g\colon (\mathbb{C}^n,0)\to (\mathbb{C},0)$ such that $f=g\circ\varphi$ and $C(V(f),0)$ or $C(V(g),0)$ is a linear subspace of $\mathbb{C}^n$, we have that there exists a homeomorphism $\phi$ between $(C(V(f),0),0)$ and $(C(V(g),0),0)$ or between $E_{0}(f)$ and $E_{0}(g)$.
\end{definition}
\begin{definition}
For each $n\in \mathbb{N}$, let $\mathcal{H}_n$ be the collection of all germs of homeomorphisms from $(\mathbb{C}^n,0)$ to $(\mathbb{C}^n,0)$ and let $\mathcal{S}\colon \mathcal{H}\to \mathcal{H}$ be the mapping given by $\mathcal{S}(\varphi)=\varphi\times id_{\mathbb{C}}$, where $\mathcal{H}:=\bigcup\limits_{n\in\mathbb{N}}\mathcal{H}_n$. We say that $\mathcal{T}\subset \mathcal{H}$ is an invariant set of $\mathcal{S}$ if $\mathcal{S}(\mathcal{T})\subset \mathcal{T}$.
\end{definition}
\begin{definition}\label{def_tg_hom}
Let $\mathcal{T}$ be a subset of $\mathcal{H}$.
We say that $\mathcal{T}$ preserves linear tangent cones if each element $\varphi\in \mathcal{T}$ preserves linear tangent cones. We say that $\mathcal{T}$ preserves weakly linear tangent cones if for each element $\varphi\in \mathcal{T}$ there exists a positive integer $m$ such that $\mathcal{S}^m(\varphi)$ preserves linear tangent cones.
\end{definition}

\begin{example}
If $Lip$ is the collection, for each $n\in\mathbb{N}$, of all germs of bi-Lipschitz homeomorphisms $h\colon (\mathbb{C}^n,0)\to (\mathbb{C}^n,0)$, then by Theorem 3.2 in \cite{Sampaio:2016}, it is easy to verify that $Lip$ preserves linear tangent cones and is an invariant set of $\mathcal{S}$.
\end{example}

\begin{theorem}\label{main_thm}
Let $f,g\colon (\mathbb{C}^n,0)\to (\mathbb{C},0)$ be complex analytic functions and let $\mathcal{T}\subset \mathcal{H}$ be an invariant set of $\mathcal{S}$. Suppose that there exists $\varphi\in \mathcal{T}$ such that $f=g\circ\varphi$. If $\mathcal{T}$ preserves weakly linear tangent cones, then ${\rm ord}_0 f={\rm ord}_0 g$.
\end{theorem}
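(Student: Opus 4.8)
The plan is to reduce the general case to the special situation covered by A'Campo--L\^e's Theorem by passing to suspensions. The key observation is that multiplicity behaves well under suspension: if $f\colon(\mathbb{C}^n,0)\to(\mathbb{C},0)$ has order $d$ at the origin, then the function $F\colon(\mathbb{C}^{n+1},0)\to(\mathbb{C},0)$ given by $F(x,t)=f(x)$ (equivalently $F=f\circ\mathrm{pr}$) still has order $d$ at the origin, and its tangent cone is $C(V(f),0)\times\mathbb{C}$. More to the point for the present argument, I want to track the \emph{lowest-degree part}. Write $f=f_d+f_{d+1}+\cdots$ with $f_d\ne 0$; then $m(V(f),0)=d={\rm ord}_0 f$, and the tangent cone is $V(f_d)$. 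Consider instead the family $f^{(k)}(x,y_1,\dots,y_k):=f(x)+y_1^{d}+\cdots+y_k^{d}$ — no, better: I will suspend so as to turn the tangent cone into a linear subspace. The cleanest device is the following: for a positive integer $N$ larger than $d$, look at $\tilde f(x,t)=f(x)+t^{N}$; but this changes the order only if $N<d$, so instead take $\tilde f(x,t)=f(x)+t$. This has order $1$ and smooth zero set, which is exactly the A'Campo--L\^e situation, but the difficulty is that the hypothesis $f=g\circ\varphi$ only suspends to $\tilde f=\tilde g\circ\mathcal{S}(\varphi)$ after we arrange the extra coordinate to be matched — and indeed $\mathcal{S}(\varphi)=\varphi\times id_{\mathbb{C}}$ does exactly this: $\tilde g\circ\mathcal{S}(\varphi)(x,t)=g(\varphi(x))+t=f(x)+t=\tilde f(x,t)$.

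So here is the strategy in order. \textbf{Step 1.} Given $\varphi\in\mathcal{T}$ with $f=g\circ\varphi$, form the suspensions $f_j:=f\circ\mathrm{pr}_n\colon(\mathbb{C}^{n+j},0)\to(\mathbb{C},0)$ and similarly $g_j$, where $\mathrm{pr}_n$ is projection onto the first $n$ coordinates; then $\mathcal{S}^{j}(\varphi)=\varphi\times id_{\mathbb{C}^j}\in\mathcal{T}$ (using that $\mathcal{T}$ is an invariant set of $\mathcal{S}$) and $f_j=g_j\circ\mathcal{S}^{j}(\varphi)$. Note ${\rm ord}_0 f_j={\rm ord}_0 f$ and $C(V(f_j),0)=C(V(f),0)\times\mathbb{C}^j$ — in particular these suspensions are \emph{never} linear subspaces unless the original tangent cone was, so suspension alone does not land us in the "linear" case. \textbf{Step 2.} Therefore I must instead exploit the hypothesis that $\mathcal{T}$ preserves \emph{weakly} linear tangent cones only through a cleverer choice of the pair of functions. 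Pick $m$ so that $\mathcal{S}^m(\varphi)$ preserves linear tangent cones. Now apply this property not to $f_m,g_m$ but to an auxiliary pair: set $h:=f$ and $h':=g$, and add a linear coordinate, i.e. consider $\hat f(x,t):=f(x)+t$ and $\hat g(x,t):=g(x)+t$ on $\mathbb{C}^{n+1}$. These have order $1$ and linear (indeed hyperplane) tangent cones, and $\hat f=\hat g\circ\mathcal{S}(\varphi)$. Suspending this $m$ times more, $\hat f_{m}:=\hat f\circ\mathrm{pr}_{n+1}$ and $\hat g_m$ have tangent cone a hyperplane times $\mathbb{C}^m$, still a linear subspace, and $\hat f_m=\hat g_m\circ\mathcal{S}^{m+1}(\varphi)$ with $\mathcal{S}^{m+1}(\varphi)=\mathcal{S}^m(\mathcal{S}(\varphi))$. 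Hmm — this only tells us the tangent cones of $\hat f,\hat g$ are homeomorphic, which is trivially true. This does not yet extract ${\rm ord}_0 f$.

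The correct mechanism, which I now commit to, is this: reduce ${\rm ord}_0 f={\rm ord}_0 g$ to the case where \emph{one} of the two tangent cones is linear by a "blow-up / generic hyperplane section" trick, then invoke A'Campo--L\^e. \textbf{Step A.} By the earlier discussion (the analogue of Theorems~\ref{B_implies_A} and \ref{relative_cor} in the excerpt, whose proofs use A'Campo--L\^e), it suffices to establish the conclusion when $C(V(f),0)$ is a linear subspace; indeed, suppose we have proven the theorem in that restricted generality. Since $C(V(f),0)=V(f_d)$ with $f_d$ homogeneous of degree $d$, after a linear change of coordinates (which we may absorb into $\varphi$ since linear isomorphisms lie in every reasonable class and, crucially, we may instead compose on the source) we cannot in general make $f_d$ linear unless $d=1$ — so this is not a free reduction. \textbf{Step B.} Hence the real content: use that $\mathcal{S}^m(\varphi)$ preserves linear tangent cones applied to the specific pair obtained by the standard trick of intersecting $V(f)$ with a generic linear subspace of the right dimension so that the section has an isolated singularity, thereby \emph{not} changing the multiplicity, and then deforming — this is where the suspension operator $\mathcal{S}$ re-enters, because the condition "preserves linear tangent cones" must be stable under the product with $id_\mathbb{C}$, which is precisely why the definition quantifies over $\mathcal{S}^m(\varphi)$ rather than $\varphi$ itself. \textbf{The main obstacle}, and the step I expect to be hardest, is exactly this bridge: constructing, from the bare data $f=g\circ\varphi$, a \emph{new} pair of analytic functions $F=G\circ\psi$ with $\psi=\mathcal{S}^m(\varphi)$ and with $C(V(F),0)$ (or $C(V(G),0)$) a genuine linear subspace, in such a way that a homeomorphism $E_0(F)\cong E_0(G)$ (or $C(V(F),0)\cong C(V(G),0)$) forces ${\rm ord}_0 f={\rm ord}_0 g$. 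I expect the construction of $F,G$ to proceed by taking $F(x,z_1,\dots,z_m)=z_1^{a_1}+\cdots$ plus correction terms engineered so that the lowest-order part of $F$ is linear in the $z$-variables while the multiplicity information of $f$ is read off from the dimension or the number of irreducible components of $E_0(F)$; making $E_0(G)$ homeomorphic to $E_0(F)$ then pins down a numerical invariant (dimension of the exceptional divisor, or its top Betti number) equal on both sides, and A'Campo--L\^e guarantees $G$ really does produce a linear tangent cone on the nose rather than merely up to homeomorphism. Once that numerical invariant is shown to equal ${\rm ord}_0 f$ on the $f$-side and ${\rm ord}_0 g$ on the $g$-side, the theorem follows.
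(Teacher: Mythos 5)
There is a genuine gap: you correctly identify the crux --- manufacturing from $f=g\circ\varphi$ a new pair $F=G\circ\mathcal{S}^m(\varphi)$ in which exactly one tangent cone is linear --- but you stop there, declaring it ``the step I expect to be hardest'' and only sketching a vague template ($F=z_1^{a_1}+\cdots$ plus unspecified corrections, numerical invariants of $E_0$, etc.). The paper's construction is short and you nearly had it in hand when you wrote down $\tilde f(x,t)=f(x)+t^N$ and then discarded it. The missing idea is to run the argument by contradiction and choose the exponent \emph{asymmetrically with respect to the contradiction hypothesis}: assume ${\rm ord}_0 f=k<{\rm ord}_0 g$ (note $k>1$ by A'Campo--L\^e, since $f=g\circ\varphi$ gives a homeomorphism of pairs), and set $\tilde f(x,t)=f(x)+t_m^k$, $\tilde g(x,t)=g(x)+t_m^k$ on $\mathbb{C}^{n+m}$, with $m$ chosen so that $\mathcal{S}^m(\varphi)$ preserves linear tangent cones. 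Because $k<{\rm ord}_0 g$, the initial form of $\tilde g$ is $t_m^k$, so $C(V(\tilde g),0)=\{t_m=0\}$ \emph{is} linear; because $f_k\not\equiv 0$ and $k>1$, the initial form of $\tilde f$ is $f_k+t_m^k$, which is not a power of a linear form, so $C(V(\tilde f),0)$ is \emph{not} linear. This is precisely the separation your symmetric choices ($+t$, or $+t^N$ with the same effect on both sides) could never produce, which is why your Step~2 collapsed into a triviality.

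The second omission is the mechanism that turns ``homeomorphic to a linear cone'' into ``is a linear cone,'' without which no contradiction can be extracted. The definition of preserving linear tangent cones only yields a homeomorphism $C(V(\tilde f),0)\cong C(V(\tilde g),0)$ or $E_0(\tilde f)\cong E_0(\tilde g)$; the paper then invokes Prill's theorem (a cone in $\mathbb{C}^N$ homeomorphic to a linear subspace is a linear subspace) in the first case, and a result of Dimca on projective hypersurfaces homeomorphic to $\mathbb{C}P^{N}$ in the second case --- the latter failing exactly when $\dim E_0(\tilde f)=2$, which forces one further suspension by $s_r^{k+1}$ to escape that dimension. Your proposal mentions neither Prill nor any substitute, and your proposed invariants (``dimension of the exceptional divisor, or its top Betti number'') would not by themselves rule out a nonlinear cone whose projectivization happens to share those invariants with $\mathbb{C}P^{n+m-2}$. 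Finally, the reverse inequality ${\rm ord}_0 f\leq{\rm ord}_0 g$ needs the observation that $\mathcal{T}^{-1}$ is again an invariant set preserving weakly linear tangent cones, applied to $g=f\circ\varphi^{-1}$; this is routine but should be stated.
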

\begin{proof}
Since $\mathcal{T}$ preserves weakly linear tangent cones, there exists a positive integer $m$ such that $\tilde\varphi:=\mathcal{S}^m(\varphi)$ preserves linear tangent cones.\\

\noindent{\bf Claim.} ${\rm ord}_0 f\geq {\rm ord}_0 g$.\\

Suppose that ${\rm ord}_0 f<{\rm ord}_0 g$. If $k={\rm ord}_0 f$, then by A'Campo-L\^e's Theorem, we have that $k>1$. Let us define $\tilde f, \tilde g\colon (\mathbb{C}^{n}\times\mathbb{C}^m,0)\to (\mathbb{C},0)$ by
$\tilde f(x,t)=f(x)+t_m^k$ and $\tilde g(x,t)=g(x)+t_m^k$, where $t=(t_1,...,t_m)$. Then, $C(V(\tilde g),0)=\{t_m=0\}=\mathbb{C}^{n+m-1}\times\{0\}$ and $\tilde f=\tilde g\circ\tilde \varphi$. Moreover, since $f_k:={\bf in} f\not \equiv 0$ and $k>1$, we get that $f_k+t_m^k$ cannot be a power of a linear form and, in particular, $C(V(\tilde f),0)=V(f_k+t_m^k)$ is not a linear subspace.

Since $\tilde f=\tilde g\circ\tilde \varphi$ and $\tilde\varphi$ preserves linear tangent cones, we have two possible cases:

\noindent{\it 1. There exists a homeomorphism $\tilde\phi\colon (V(f_k+t_m^k),0)\to (\mathbb{C}^{n+m-1}\times\{0\},0)$.}

 Thus, by Prill's Theorem (Theorem in \cite{Prill:1967}), $V(f_k+t_m^k)$ is a linear subspace of $\mathbb{C}^{n+m}$, which is a contradiction. Then, ${\rm ord}_0 f\geq {\rm ord}_0 g$. 

\noindent{\it 2. There exists a homeomorphism $\tilde\phi\colon E_{0}(\tilde f)\to E_{0}(\tilde g)$.} 

If $\dim E_{0}(\tilde f)\not=2$, by Corollary 2.12 in (\cite{Dimca:1992}, p. 145), $\mathbb{P}V(f_k+t_m^k)$ is a linear subspace of $\mathbb{C}P^{n+m-1}$, since $E_{0}(\tilde f)\cong \mathbb{P}V(f_k+t_m^k)$ and  $E_{0}(\tilde g)\cong \mathbb{C}P^{n+m-2}$. In particular, $C(V(\tilde f),0)$ is a linear subspace of $\mathbb{C}^{n+m}$, which is a contradiction. 
If $\dim E_{0}(\tilde f)=2$ and, in particular, $n+m=4$, then there exists a positive integer $r$ such that $\bar \varphi=\mathcal{S}^r(\tilde\varphi)$ preserves linear tangent cones, since $\mathcal{T}\subset \mathcal{H}$ is an invariant set of $\mathcal{S}$ and preserves weakly linear tangent cones. Thus, we define $\bar f, \bar g\colon (\mathbb{C}^4\times\mathbb{C}^{r},0)\to (\mathbb{C},0)$ by
$ \bar f(z,s)=\tilde f(z)+s_r^{k+1}$ and $\bar g(z,s)=\tilde g(z)+s_r^{k+1}$, where $s=(s_1,...,s_r)$. Therefore, $\bar f=\bar g\circ\bar \varphi$. Moreover, $C(V(\bar g),0)$ is the linear subspace $\{t_m=0\}\subset \mathbb{C}^{r+4}$, $C(V(\bar f),0)=V(f_k+t_m^k)\subset \mathbb{C}^{r+4}$ and as before $C(V(\bar f),0)$ cannot be a linear subspace. However, since $\bar\varphi$ preserves linear tangent cones and $\dim E_{0}(\bar f)=2+r\not=2$, by the same reason as before, $C(V(\bar f),0)$ is a linear subspace of $\mathbb{C}^{n+m+r}$, which is a contradiction. Then, in any case, ${\rm ord}_0 f\geq {\rm ord}_0 g$, which finish the proof of Claim. 

We need also to show that ${\rm ord}_0 f\leq {\rm ord}_0 g$. In order to do this, let us define $\mathcal{T}^{-1}=\{\psi^{-1};\,\psi\in \mathcal{T}\}$. Now, it is clear that $\mathcal{T}^{-1}$ is an invariant set of $\mathcal{S}$, $g=f\circ\varphi^{-1}$ and $\varphi^{-1}\in \mathcal{T}^{-1}$. Moreover, by definition, a homeomorphism $\psi$ preserves linear tangent cones if and only if $\psi^{-1}$ preserves linear tangent cones, as well. Therefore, by Claim, ${\rm ord}_0 g\leq {\rm ord}_0 f$, which finish the proof.
\end{proof}

As a direct consequence, we obtain Theorem \ref{B_implies_A}.

\begin{definition}
We say that a homeomorphism $\varphi\colon(\mathbb{C}^n,0)\to (\mathbb{C}^n,0)$ preserves weakly relative multiplicities if for any two reduced complex analytic functions $f,g\colon (\mathbb{C}^n,0)\to (\mathbb{C},0)$ such that $C(V(f),0)$ or $C(V(g),0)$ is a linear subspace and $f=g\circ \varphi$, then $C(V(f),0)$ and $C(V(g),0)$ are irreducible and $k_{V(f)}(C(V(f),0))=k_{V(g)}(C(V(g),0)$.
\end{definition}

\begin{theorem}\label{relative_thm}
Let $f,g\colon (\mathbb{C}^n,0)\to (\mathbb{C},0)$ be reduced complex analytic functions and let $\mathcal{T}\subset \mathcal{H}$ be an invariant set of $\mathcal{S}$. Suppose that any element of $\mathcal{T}$ preserves weakly relative multiplicities. If there exists $\varphi\in \mathcal{T}$ such that $f=g\circ\varphi$, then $m(V(f),0)=m(V(g),0)$.
\end{theorem}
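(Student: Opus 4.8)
The plan is to reduce Theorem~\ref{relative_thm} to Theorem~\ref{main_thm}. The key observation is that an invariant set $\mathcal{T}$ of $\mathcal{S}$ whose elements all preserve weakly relative multiplicities automatically preserves weakly linear tangent cones: if $\varphi\in\mathcal{T}$ and $f=g\circ\varphi$ with, say, $C(V(f),0)$ a linear subspace, then the hypothesis forces $C(V(f),0)$ and $C(V(g),0)$ to be irreducible with equal relative multiplicity, so in particular $C(V(g),0)$ is the zero set of a power of an irreducible polynomial defining a linear subspace, hence $C(V(g),0)$ is itself a linear subspace; thus both tangent cones are linear and equal, and there is trivially a homeomorphism between $(C(V(f),0),0)$ and $(C(V(g),0),0)$. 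Since $\varphi$ itself already does this (with $m=0$, or $m=1$ using $\mathcal{S}$), $\mathcal{T}$ preserves linear tangent cones, a fortiori weakly linear tangent cones.

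First I would fix $f,g$ reduced and $\varphi\in\mathcal{T}$ with $f=g\circ\varphi$, and apply Theorem~\ref{main_thm} to conclude ${\rm ord}_0 f = {\rm ord}_0 g$; call this common value $m$. This already gives that the multiplicities $m(V(f),0)$ and $m(V(g),0)$ are both equal to $m$ \emph{as orders}, but one must still check this equals the geometric multiplicity in the sense defined in the introduction — which it does, since for a reduced holomorphic function the multiplicity of $V(f)$ at $0$ is by definition ${\rm ord}_0 f$. So the order equality is literally the desired multiplicity equality, and the proof is essentially complete once we know $\mathcal{T}$ preserves weakly linear tangent cones.

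Therefore the real content is the first paragraph's reduction, and the one subtle point to verify carefully is that preserving weakly relative multiplicities is stable under $\mathcal{S}$ in the way needed — but this is not actually required for the reduction, since each $\varphi\in\mathcal{T}$ already preserves linear tangent cones outright (the relative-multiplicity hypothesis is imposed on \emph{every} element of $\mathcal{T}$, including the stabilizations $\mathcal{S}^m(\varphi)\in\mathcal{T}$). So I would simply note: take any $\psi\in\mathcal{T}$ and any $f',g'$ with $f'=g'\circ\psi$ and one tangent cone linear; the weak-relative-multiplicity property of $\psi$ forces both tangent cones to be irreducible, and an irreducible tangent cone that is linear on one side forces the other to be linear too (equal relative multiplicities and the multiplicativity formula $m(V(f'),0)=\sum k_{V(f')}(V(h_i))\,m(V(h_i),0)$ combined with irreducibility pin down the defining form up to a power of a linear form). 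Hence $\psi$ preserves linear tangent cones, $\mathcal{T}$ preserves linear (hence weakly linear) tangent cones, and Theorem~\ref{main_thm} applies verbatim.

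The main obstacle is purely bookkeeping: making sure the definition of ``preserves weakly relative multiplicities'' is quoted with the correct quantifiers (it is a condition on all pairs $f',g'$ and all elements of $\mathcal{T}$, not just $\varphi$), and confirming that ``$C(V(f'),0)$ irreducible and $=$ a power of a linear form's zero set'' genuinely implies linearity of $C(V(g'),0)$ — which follows because $C(V(g'),0)=V((g')_m)$ is then cut out by a polynomial whose reduction is linear. Everything else is an immediate appeal to Theorem~\ref{main_thm}.
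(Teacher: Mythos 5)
Your reduction to Theorem~\ref{main_thm} hinges on the claim that a homeomorphism preserving weakly relative multiplicities automatically preserves linear tangent cones, and this claim does not hold: it is the gap in the argument. Suppose $f'=g'\circ\psi$ with $C(V(f'),0)$ linear, say ${\bf in}(f')=\ell^{a}$ for a linear form $\ell$. The definition of ``preserves weakly relative multiplicities'' only gives you that $C(V(g'),0)$ is irreducible and that $k_{V(g')}(C(V(g'),0))=a$, i.e.\ ${\bf in}(g')=h^{a}$ with $h$ irreducible. Nothing constrains $\deg h$: the multiplicativity formula yields $m(V(g'),0)=a\cdot m(V(h),0)$, which ``pins down'' $h$ to be linear only if you already know $m(V(g'),0)=m(V(f'),0)=a$ --- but that equality of orders is precisely the conclusion of the theorem, so your argument is circular at this point. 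An irreducible cone of the form $V(h)$ with $\deg h>1$ (e.g.\ $h=x^2+y^2+z^2$) is a perfectly good candidate for $C(V(g'),0)$ compatible with everything the definition gives you, and it is not homeomorphic to a linear subspace.

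The paper avoids this by not passing through ``preserves linear tangent cones'' at all. Assuming for contradiction $k:={\rm ord}_0 f<{\rm ord}_0 g$, it suspends to $\tilde f=f+t^{k}$, $\tilde g=g+t^{k}$ on $\mathbb{C}^{n+1}$ and uses $\tilde\varphi=\mathcal{S}(\varphi)\in\mathcal{T}$. The point is that the suspension forces \emph{both} orders to equal $k$ (since ${\rm ord}_0 g>k$ gives ${\bf in}\,\tilde g=t^{k}$), so the relative multiplicity on the $\tilde g$ side is exactly $k$. Transferring it to the $\tilde f$ side and using ${\rm ord}_0\tilde f=k$ gives $k=k\cdot m(C(V(\tilde f),0),0)$, hence the tangent cone of $\tilde f$ is smooth, hence linear --- contradicting that $f_k+t^{k}$ is not a power of a linear form (here A'Campo--L\^e is used to guarantee $k>1$). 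It is this forced coincidence of orders after suspension, unavailable in your general setting, that makes the degree bookkeeping close. Your second paragraph (order equals multiplicity for reduced germs) is fine, but the theorem needs its own suspension argument rather than a formal reduction to Theorem~\ref{main_thm}.
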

\begin{proof}
Suppose that $m(V(f),0)={\rm ord}_0 f<{\rm ord}_0 g=m(V(g),0)$. If $k={\rm ord} (f)$, then, as before, by A'Campo-L\^e's Theorem, we have that $k>1$. Let $f_k$ be the homogeneous polynomial formed by the monomials of $f$ that have degree $k$. Let us define $\tilde f, \tilde g\colon (\mathbb{C}^{n+1},0)\to (\mathbb{C},0)$ by
$\tilde f(x,t)=f(x)+t^k$ and $\tilde g(x,t)=g(x)+t^k$. Then, $C(V(\tilde g),0)=\{t=0\}=\mathbb{C}^n\times\{0\}$. Moreover, since $f_k\not \equiv 0$ and $k>1$, we get that $f_k+t^k$ cannot be a power of a linear form and, in particular, $C(V(\tilde f),0)=V(f_k+t^k)$ is not a linear subspace.
Now, let us define $\tilde\varphi\colon (\mathbb{C}^{n+1},0)\to (\mathbb{C}^{n+1},0)$ by $\tilde\varphi(x,t)=(\varphi(x),t)$. We have that $\tilde\varphi\in \mathcal{T}$. Then, $C(V(\tilde f),0)$ is irreducible and $k_{V(\tilde f)}(C(V(\tilde f),0))=k_{V(\tilde g)}(C(V(\tilde g),0))=k$, since $\tilde\varphi$ preserves weakly relative multiplicities. Therefore, we have that $k={\rm ord} (\tilde f)=k\cdot m(C(V(\tilde f),0),0)$. Thus, $m(C(V(\tilde f),0),0)=1$ and, then, $C(V(\tilde f),0)$ is smooth at $0$, which is a contradiction, since $C(V(\tilde f),0)$ cannot be a linear subspace. Therefore, ${\rm ord} (f)\geq {\rm ord} (g)$ and by similar arguments as in the proof of Theorem \ref{main_thm}, we obtain also ${\rm ord} (f)\leq {\rm ord} (g)$.
\end{proof}

As a direct consequence, we obtain Theorem \ref{relative_cor}.

Let us finish this Subsection by showing that Question B$_{Rel}$ has a negative answer and that Question B' also has a negative answer, when we do not require that $C(V(f),0)$ or $C(V(g),0)$ need to be a linear subspace. Here, as it already was remarked in the Introduction, we cannot use A'Campo-L\^e's Theorem as it was used in \cite{Bartolo:2010}.
\begin{proposition}\label{ex_rel_inv}
There exists a family $\{V_t\}$ of hypersurface singularities topologically equisingular such that $C(V(f_t),0)$ is not homeomorphic to $C(V(f_0),0)$ for any $t\not=0$. Moreover, this same family gives a negative answer to Question B$_{Rel}$.
\end{proposition}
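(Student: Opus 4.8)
The plan is to recall the classical family of Briançon–Speder type (or a related one) for which topological equisingularity holds while the tangent cone jumps. Concretely, I would take a one-parameter family $f_t\colon(\mathbb{C}^3,0)\to(\mathbb{C},0)$, say of the shape $f_t(x,y,z)=z^5 + \text{(monomials)} + t\,xy^? z^?$, chosen so that the family $\{V(f_t)\}$ is $\mu^*$-constant and hence (by Lê–Ramanujam in the relevant dimension, or by a direct Thom–Mather / Timourian argument available for such families) topologically equisingular — in fact equisingular via an ambient homeomorphism, and even so that $f_t=f_0\circ\varphi_t$ for homeomorphisms $\varphi_t\colon(\mathbb{C}^3,0)\to(\mathbb{C}^3,0)$. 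The key design requirement is that the initial form $(f_0)_m$ be, up to coordinate change, a power of a linear form (so $C(V(f_0),0)$ is a hyperplane, hence smooth), while for $t\neq 0$ the initial form $(f_t)_m$ acquires extra monomials so that $C(V(f_t),0)=V((f_t)_m)$ is a genuinely singular cone. This is exactly the phenomenon exploited in \cite{Bartolo:2010}, so I would reuse their explicit example verbatim.

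Next I would prove the two assertions. For the first — that $C(V(f_t),0)$ is not homeomorphic to $C(V(f_0),0)$ for $t\neq 0$ — I invoke Prill's Theorem (Theorem in \cite{Prill:1967}): a cone that is homeomorphic to a linear subspace must itself be a linear subspace. Since $C(V(f_0),0)$ is a linear subspace (a hyperplane) and $C(V(f_t),0)$ is not (its defining initial form is not a power of a linear form, by the construction), no homeomorphism $(C(V(f_t),0),0)\to(C(V(f_0),0),0)$ can exist. For the negative answer to Question B$_{Rel}$: in the family, $(f_0)_m$ equals (a unit times) $\ell^m$ for a single linear form $\ell$, so $C(V(f_0),0)$ is irreducible with relative multiplicity $m$ along its unique component; whereas for $t\neq 0$ one checks from the explicit initial form that $(f_t)_m$ factors into at least two distinct irreducible factors (or into a single factor of strictly smaller multiplicity), so the multiset of pairs (irreducible component, relative multiplicity) for $C(V(f_t),0)$ differs from that of $C(V(f_0),0)$ in cardinality, hence no bijection $\sigma$ matching relative multiplicities can exist. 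That contradicts Question B$_{Rel}$ for the homeomorphism $\varphi_t$.

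The main obstacle I expect is not the algebra of the initial forms — that is a finite check on the chosen example — but rather establishing topological equisingularity of the family, i.e. producing the homeomorphisms $\varphi_t$ with $f_t=f_0\circ\varphi_t$. For the \cite{Bartolo:2010} example this is already done (they verify $\mu$-constancy and conclude equisingularity in $\mathbb{C}^3$ via Lê–Ramanujam plus Timourian/King, obtaining precisely an ambient homeomorphism realizing the right equivalence), so the cleanest route is to cite it and simply observe the two new consequences above. I would therefore structure the proof as: (i) recall the explicit family and its equisingularity from \cite{Bartolo:2010}; (ii) compute the initial forms $(f_0)_m$ and $(f_t)_m$ and their factorizations; (iii) apply Prill's Theorem to get the non-homeomorphism of tangent cones; (iv) compare the relative-multiplicity data to refute Question B$_{Rel}$.

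\begin{remark}
Note that this does not contradict Theorem \ref{main_thm} or Theorem \ref{relative_thm}: those require the class $\mathcal{T}$ to which $\varphi_t$ belongs to preserve (weakly) linear tangent cones, respectively (weakly) relative multiplicities, and the content of Proposition \ref{ex_rel_inv} is precisely that the class of all germs of homeomorphisms does not have these properties.
\end{remark}
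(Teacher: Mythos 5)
Your overall strategy --- reuse the family of \cite{Bartolo:2010}, cite their topological equisingularity via homeomorphisms $\varphi_t$ with $f_t=f_0\circ\varphi_t$, compute initial forms, and compare tangent cones and relative multiplicities --- is the same as the paper's. But your execution rests on a false description of that example. For $f_t=z^{12}+zy^3x+ty^2x^3+x^6+y^5$ one has ${\bf in}(f_0)=y^3(xz+y^2)$ and ${\bf in}(f_t)=y^2(xyz+tx^3+y^3)$ for $t\neq 0$: neither initial form is a power of a linear form, so $C(V(f_0),0)$ is \emph{not} a hyperplane and Prill's theorem cannot be invoked to distinguish the cones. (The paper explicitly warns that the A'Campo--L\^e/ambient-homeomorphism argument of \cite{Bartolo:2010} is unavailable here, since a homeomorphism of the cones alone need not preserve singular points.) What you need instead is a homeomorphism invariant separating two non-linear complex cones; the paper uses the invariance of multiplicity under homeomorphisms of complex cones (Proposition 3.5 in \cite{BobadillaFS:2017}), noting $m(C(V(f_t),0),0)=4\neq 3=m(C(V(f_0),0),0)$. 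Without such a result your first assertion is unproved. Moreover, an example of the shape you postulate, with $C(V(f_0),0)$ a linear subspace and $C(V(f_t),0)$ singular, would refute Question B$'$ itself (with its linearity hypothesis), which is a much stronger claim that the paper does not make and for which no example is known.

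Your treatment of Question B$_{Rel}$ has the same defect: in the actual example both tangent cones have exactly two irreducible components, $V(y)\cup V(xz+y^2)$ versus $V(y)\cup V(xyz+tx^3+y^3)$, so the component counts agree and your ``multiset differs in cardinality'' argument fails. The correct comparison is of the relative multiplicities themselves: $k_{V(f_0)}(V(y))=3$ and $k_{V(f_0)}(V(xz+y^2))=1$, while for $t\neq 0$ the relative multiplicities are $2$ and $1$; since $3$ occurs on one side but not the other, no bijection $\sigma$ can match them. This is indeed the finite check you anticipated, but it has to be carried out on the real initial forms rather than on the idealized ones you assumed.
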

\begin{proof}
In the paper \cite{Bartolo:2010} the authors showed that the family $\{V_t\}$ of hypersurface singularities defined as zero locus of
$$
f_t = z^{12} + zy^3x + ty^2x^3 + x^6 + y^5
$$
is topologically equisingular. In fact, they showed that there exists a family of homeomorphisms $\{\varphi_t\colon (\mathbb{C}^3,0)\to (\mathbb{C}^3,0)\}$ satisfying $f_t=f_0\circ \varphi_t$. 

However, ${\bf in}(f_t)=zy^3x+ty^2x^3+y^5=y^2(xyz+tx^3+y^3)$ for $t\not=0$ and ${\bf in}(f_0)=y^3(xz+y^2)$.
Thus, $m(C(V(f_t),0),0)=4$ for $t\not=0$ and $m(C(V(f_0),0),0)=3$. Therefore, by Proposition 3.5 in \cite{BobadillaFS:2017}, $C(V(f_t),0)$ is not homeomorphic to $C(V(f_0),0)$ for any $t\not=0$, which finish the first part of the proof.

Moreover, as it was said by J. Fern\'andez de Bobadilla in a private talk with the author, that family gives also a negative answer to Question B$_{Rel}$. In fact,  
it is clear to see that $k_{V(f_t)}(V(y))=2$ and $k_{V(f_t)}(V(xyz+tx^3+y^3))=1$ for $t\not=0$, but $k_{V(f_0)}(V(y))=3$ and $k_{V(f_0)}(V(xz+y^2))=1$.

Thus, 
$$k_{V(f_0)}(V(y))\not=k_{V(f_t)}(V(y))$$
and 
$$k_{V(f_0)}(V(y))\not=k_{V(f_t)}(V(xyz+tx^3+y^3)),$$
showing that $V(f_t)$ and $V(f_0)$ have different relative multiplicities, when $t\not=0$ and this gives a negative answer to Question B$_{Rel}$.
\end{proof}

\subsection{Lipschitz submanifold vs. parametric Lipschitz manifold}

\begin{proof}[Proof of Proposition \ref{ex_lip}]
For each odd natural number $k$, let $f\colon \mathbb{C}^{k+1}\to \mathbb{C}$ be the polynomial given by $f_k(x_1,...,x_k,z)=x_1^2+...+x_k^2-z^3$. We have that $V(f_k)$ has a unique singularity at $0\in \mathbb{C}^{k+1}$. Moreover, by Proposition in (\cite{Milnor:1968}, Theorem 2.10), there exist $\varepsilon >0$ and a homeomorphism $h\colon (B_{\varepsilon },V(f_k)\cap B_{\varepsilon })\to (B_{\varepsilon },CL_{V(f_k)})$ such that $h(0)=0$ and $V(f_k)\cap \mathbb{S}^{2k+1}_{\varepsilon }$ is a compact smooth manifold, where 
$$B_{\varepsilon } =\{v\in \mathbb{C}^{k+1};\|v\|\leq\varepsilon \},$$ 
$$\mathbb{S}^{2k+1}_{\varepsilon } =\{v\in \mathbb{C}^{k+1};\|v\|=\varepsilon \}$$ 
and 
$$CL_{V(f_k)}=\{tv;t\in [0,1]\mbox{ and }v\in V(f_k)\cap \mathbb{S}^{2k+1}_{\varepsilon }\}.$$
However, it was proven in \cite{Brieskorn:1966} that there is a diffeomorphism $\phi\colon \mathbb{S}^{2k-1}\to V(f_k)\cap \mathbb{S}^{2k+1}_{\varepsilon }$. Since $V(f_k)\cap \mathbb{S}^{2k+1}_{\varepsilon }$ and $\mathbb{S}^{2k-1}$ are compact smooth manifolds, we have that $\phi$ is bi-Lipschitz homeomorphism. Therefore, the mapping $\varphi\colon \mathbb{R}^{2k}\to C_k$ given by 
$$
\varphi(x)=\left \{\begin{array}{ll}
                    \|x\|\cdot\phi(\frac{x}{\|x\|}),& \mbox{ if } x\not =0\\
                    0,& \mbox{ if } x =0
                    \end{array}\right.
$$
is a bi-Lipschitz homeomorphism as well, where $C_k =\{tv;t\geq 0\mbox{ and }v\in V(f_k)\cap \mathbb{S}^{2k+1}_{\varepsilon }\}$.  In particular, $C_k$ is a parametric Lipschitz manifold.\\

\noindent{\bf Claim 1.} $C_k$ is not a Lipschitz submanifold.\\

Suppose that $C_k$ is a Lipschitz submanifold. Then, since $0\in C_k$, there exist an open neighborhood $U\subset \mathbb{C}^{k+1}\equiv \mathbb{R}^{2k+2}$ and a bi-Lipschitz homeomorphism $\phi\colon U\to \phi (U)\subset \mathbb{C}^{k+1}$ such that $\phi(0)=0$ and $\phi(C_k\cap U)=V(z)\cap \phi (U)$, 
where $V(z)=\{(x_1,...,x_k,z)\in \mathbb{C}^{k+1};z=0\}$. Therefore, we have a homeomorphism $\phi\colon (\mathbb{C}^{k+1}, V(z),0)\to (\mathbb{C}^{k+1},V(f_k),0)$. Since $V(z)$ is smooth at $0$, by A'Campo-L\^e's Theorem, $V(f_k)$ is smooth at $0$, which is a contradiction. Thus, Claim 1 is proven and we finish the proof.
\end{proof}

We would like to remark that we cannot obtain counterexamples, like in the proof of Proposition \ref{ex_lip}, being complex analytic sets, since it was proven by the author in \cite[Theorem 4.2]{Sampaio:2016} (see also Theorem 3.1 in \cite{BirbrairFLS:2016}) that if a complex analytic set $X$ is a parametric Lipschitz manifold then $X$ is smooth and, in particular, it is a Lipschitz submanifold.

\subsection{Invariance of the degree by right equivalence}\label{subsec:degree}

Let $f\colon\mathbb{C}^n\to \mathbb{C}$ be a complex polynomial with degree $d>0$. We write
$$f=f_m+f_{m+1}+\cdots+f_{d-1}+f_d$$ 
where each $f_k$ is a homogeneous polynomial of degree $k$. Then, the tangent cone of $V(f)$ at infinity is $C_{\infty}(V(f))=V(f_d)$.

\begin{definition}\label{def_tg_hom_infinity}
For each $n\in \mathbb{N}$, let $\mathcal{H}_{n,\infty}$ be the collection of all germs of homeomorphisms at infinity from $\mathbb{C}^n$ to $\mathbb{C}^n$. Let $\mathcal{T}_{\mathcal{H},\infty}\subset \mathcal{H}_{\infty}:=\bigcup\limits_{n\in\mathbb{N}}\mathcal{H}_{n,\infty}$ be the maximal subset with respect the inclusion satisfying:
\begin{itemize}
\item [i)] If $\varphi\in \mathcal{T}_{\mathcal{H},\infty}$, then $\varphi\times {\rm id}_{\mathbb{C}}\in \mathcal{T}_{\mathcal{H},\infty}$;
\item [ii)] If $\varphi\in \mathcal{T}_{\mathcal{H},\infty}$ and $f,g\colon \mathbb{C}^n\to \mathbb{C}$ are complex polynomials such that $\varphi(V(f))=V(g)$ (outside of a compact subset), then there exists a homeomorphism $\phi\colon C_{\infty}(V(f))\to C_{\infty}(V(g))$.
\end{itemize}
\end{definition}
\begin{example}\label{exam_lip}
If $Lip_{n,\infty}$ is the collection of all germs of bi-Lipschitz homeomorphisms at infinity from $\mathbb{C}^n$ to $\mathbb{C}^n$, then by Theorem 4.5 in \cite{FernandesS:2018}, it is easy to verify that $Lip_{n,\infty}\subset \mathcal{T}_{\mathcal{H},\infty}$.
\end{example}

As an application of the proof of Theorem \ref{main_thm}, we obtain a result about invariance of the degree.
\begin{corollary}\label{right_infinty}
Let $f,g\colon \mathbb{C}^n\to \mathbb{C}$ be complex polynomials. If there exists $\varphi\in \mathcal{T}_{\mathcal{H},\infty}$ such that $f=g\circ\varphi$ (outside of a compact subset), then ${\rm deg} (f)={\rm deg} (g)$.
\end{corollary}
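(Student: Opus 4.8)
The plan is to mimic the proof of Theorem \ref{main_thm} almost verbatim, replacing the local analytic data (germs at the origin, orders, tangent cones at $0$) by the data at infinity (polynomials, degrees, tangent cones at infinity), and using the ``infinity'' analogues of the tools invoked there. First I would reduce the statement to the inequality $\deg f \geq \deg g$: since $\mathcal{T}_{\mathcal{H},\infty}$ is closed under taking inverses (this follows from its defining maximality property, exactly as $\mathcal{T}^{-1}$ was handled in Theorem \ref{main_thm}, because the homeomorphism condition in ii) is symmetric in $f$ and $g$), and since $g = f\circ\varphi^{-1}$ outside a compact set with $\varphi^{-1}\in\mathcal{T}_{\mathcal{H},\infty}$, the reverse inequality $\deg g\geq\deg f$ comes for free once we have the claimed one.

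For the main inequality I would argue by contradiction: suppose $\deg f < \deg g$, and set $d = \deg f$. The first subtlety is that there is no ``A'Campo--L\^e at infinity'' producing $d>1$ directly; however, if $d=1$ then $f$ is, up to an affine change of coordinates absorbed into $\varphi$, a coordinate, $V(f)$ is an affine hyperplane, $C_\infty(V(f))$ is a hyperplane, and since $\varphi(V(f))=V(g)$ outside a compact set we get a homeomorphism between $C_\infty(V(f))$ and $C_\infty(V(g))=V(g_{\deg g})$; then by Prill's Theorem $V(g_{\deg g})$ is a linear subspace, forcing $\deg g = 1$, a contradiction. So we may assume $d>1$. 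Now I would set $\tilde f(x,t)=f(x)+t^d$ and $\tilde g(x,t)=g(x)+t^d$ on $\mathbb{C}^{n+1}$, and $\tilde\varphi=\varphi\times\mathrm{id}_\mathbb{C}\in\mathcal{T}_{\mathcal{H},\infty}$ by property i). Then $\tilde f = \tilde g\circ\tilde\varphi$ outside a compact set, $C_\infty(V(\tilde g))=V(t^d)_{\mathrm{red}}=\{t=0\}$ is a hyperplane, while $C_\infty(V(\tilde f))=V(f_d+t^d)$, where $f_d\not\equiv 0$ is the top-degree part of $f$; since $d>1$, $f_d+t^d$ cannot be a power of a linear form, so $C_\infty(V(\tilde f))$ is not a linear subspace.

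By property ii) applied to $\tilde\varphi$ we obtain a homeomorphism $\phi\colon C_\infty(V(\tilde f))\to C_\infty(V(\tilde g))$, i.e. $(V(f_d+t^d),0)\to(\mathbb{C}^n\times\{0\},0)$ — here I use that these are homogeneous cones, so a homeomorphism of the affine cones is equivalent to a homeomorphism of their projectivizations, and the argument splits exactly as in Theorem \ref{main_thm}: either Prill's Theorem applies directly to the cone, or Corollary 2.12 in \cite{Dimca:1992} applies to the projectivized hypersurface $\mathbb{P}V(f_d+t^d)$ (when its dimension is not $2$), in both cases forcing $C_\infty(V(\tilde f))$ to be a linear subspace, a contradiction. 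The low-dimensional exceptional case $\dim \mathbb{P}V(f_d+t^d)=2$ is dealt with by stabilizing once more: since $\mathcal{T}_{\mathcal{H},\infty}$ is closed under $\varphi\mapsto\varphi\times\mathrm{id}_\mathbb{C}$ by i), one adds further variables with a $t^{d+1}$ term to push the projective dimension away from $2$ and repeats the argument, just as in the proof of Theorem \ref{main_thm}.

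The main obstacle I anticipate is making precise the passage from ``$\varphi$ sends $V(f)$ to $V(g)$ outside a compact set'' to a genuine homeomorphism of the projectivized tangent cones at infinity, and checking that the defining property ii) of $\mathcal{T}_{\mathcal{H},\infty}$ is strong enough to deliver this after stabilization — in particular verifying that the exceptional-dimension stabilization step stays inside $\mathcal{T}_{\mathcal{H},\infty}$ and that one can always reach a dimension avoiding the value $2$ (which is immediate since adding one variable raises the projective dimension by one). Everything else is a routine transcription of the local argument, with $\mathrm{ord}_0$ replaced by $\deg$, the tangent cone at $0$ replaced by $C_\infty$, and the role of A'Campo--L\^e's Theorem in ruling out $k=1$ replaced by the elementary observation that a degree-one polynomial has a hyperplane as its tangent cone at infinity together with Prill's Theorem.
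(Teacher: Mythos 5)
Your overall strategy — transcribing the proof of Theorem \ref{main_thm} to the setting at infinity, using property i) to stabilize and property ii) plus Prill's Theorem to force a contradiction — is exactly the paper's approach, and your observation that $\mathcal{T}_{\mathcal{H},\infty}$ is closed under inverses by maximality is a correct way to reduce to one inequality. However, the transcription contains a genuine error: the direction of the inequality must be \emph{flipped} when passing from orders at $0$ to degrees at infinity, because the tangent cone at infinity is cut out by the \emph{top}-degree form rather than the initial form. You assume ${\rm deg} f<{\rm deg} g$ and set $d={\rm deg} f$, and then claim $C_{\infty}(V(\tilde g))=\{t=0\}$ for $\tilde g(x,t)=g(x)+t^{d}$. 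This is false: since ${\rm deg} g>d$, the top-degree part of $\tilde g$ is $g_{\deg g}(x)$, not $t^{d}$, so $C_{\infty}(V(\tilde g))=V(g_{\deg g})$ and the linear/non-linear dichotomy you need never materializes. The paper instead assumes ${\rm deg} f>{\rm deg} g$ and takes $k={\rm deg} f$ (the \emph{larger} degree), so that $t^{k}$ dominates in $\tilde g$ and $C_{\infty}(V(\tilde g))=\{t=0\}$ while $C_{\infty}(V(\tilde f))=V(f_{k}+t^{k})$ is not linear. With that correction your argument goes through, and the $k>1$ issue disappears for free: ${\rm deg} f>{\rm deg} g\geq 1$ (both polynomials are non-constant since $f=g\circ\varphi$ off a compact set) already forces $k\geq 2$.

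Two further points. First, your special case $d=1$ is not only unnecessary after the correction above, it is also flawed as written: from a homeomorphism $C_{\infty}(V(f))\to C_{\infty}(V(g))$ and Prill's Theorem you may conclude that $C_{\infty}(V(g))=V(g_{\deg g})$ is a linear subspace, but this does \emph{not} force ${\rm deg} g=1$, since $g_{\deg g}$ could be a power of a linear form (e.g.\ $g(x,y)=y^{2}+x$ has linear tangent cone at infinity). Ruling out exactly this degeneracy is the whole point of adjoining the extra variable with the $t^{k}$ term. Second, the case split between Prill's Theorem and Corollary 2.12 of \cite{Dimca:1992}, and the extra stabilization to avoid projective dimension $2$, are superfluous here: unlike the local Definition \ref{def_tg_hom}, property ii) of Definition \ref{def_tg_hom_infinity} always delivers a homeomorphism between the affine cones themselves, so Prill's Theorem alone suffices, which is why the paper's proof of this corollary is shorter than that of Theorem \ref{main_thm}.
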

\begin{proof}
Suppose that ${\rm deg} (f)>{\rm deg} (g)$. If $k={\rm deg} (f)$, then we have that $k>1$, since $g$ cannot be constant. Let $f_k$ be the homogeneous polynomial formed by the monomials of $f$ that have degree $k$. Let us define $\tilde f, \tilde g\colon (\mathbb{C}^{n+1},0)\to (\mathbb{C},0)$ by
$\tilde f(x,t)=f(x)+t^k$ and $\tilde g(x,t)=g(x)+t^k$. Then, $C(V(\tilde g),0)=\{t=0\}=\mathbb{C}^n\times\{0\}$. Moreover, since $f_k\not \equiv 0$ and $k>1$, we get that $f_k+t^k$ cannot be a power of a linear form and, in particular, $C(V(\tilde f),0)=V(f_k+t^k)$ is not a linear subspace.
Now, let us define $\tilde\varphi\colon (\mathbb{C}^{n+1},0)\to (\mathbb{C}^{n+1},0)$ by $\tilde\varphi(x,t)=(\varphi(x),t)$. By item i) of the definition \ref{def_tg_hom_infinity}, we have that $\tilde\varphi\in \mathcal{T}_{\mathcal{H},\infty}$. By item iii) of the definition \ref{def_tg_hom_infinity}, there exists a homeomorphism $\tilde\phi\colon (V(f_k+t^k),0)\to (\mathbb{C}^n\times\{0\},0)$. Thus, by Prill's Theorem (Theorem in \cite{Prill:1967}), $V(f_k+t^k)$ is a linear subspace, which is a contradiction. Therefore, ${\rm ord} (f)\leq {\rm ord} (g)$ and by similar arguments as in the proof of Theorem \ref{main_thm}, we obtain also ${\rm ord} (f)\geq {\rm ord} (g)$.
\end{proof}

We would like to say that in general it is hard the degree to be preserved for some equivalence, as we can see in next example.
\begin{example}
Let $f,g\colon \mathbb{C}^n\to \mathbb{C}$ be two complex polynomials such that $f(x,y)=y-x^2$ and $g(x,y)=y$. Let $\varphi\colon \mathbb{C}^2\to \mathbb{C}^2$ be the polynomial diffeomorphism given by $\varphi(x,y)=(x,y-x^2)$. Then $f=g\circ \varphi$. However, ${\rm deg} (f)=2$ and ${\rm deg} (g)=1$. 
\end{example}
In particular, a polynomial diffeomorphism does not need to belong to $\mathcal{T}_{\mathcal{H},\infty}$.

Thus, by Example \ref{exam_lip} and Corollary \ref{right_infinty}, we obtain the following particular case of Corollary 3.15 in \cite{Sampaio:2019}.
\begin{corollary}
Let $f,g\colon\mathbb C^n\to \mathbb C$ be two polynomials. If there exists $\varphi \in Lip_{n,\infty}$ such that $f=g\circ\varphi$ (outside of a compact subset) then ${\rm deg}(f)={\rm deg}(g)$.
\end{corollary}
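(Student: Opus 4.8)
\textit{Proof proposal.} The plan is to deduce this statement by simply feeding a bi-Lipschitz homeomorphism at infinity into Corollary \ref{right_infinty}; the only thing to check is that $Lip_{n,\infty}$ is contained in the class $\mathcal{T}_{\mathcal{H},\infty}$, which is exactly the content of Example \ref{exam_lip}. So the first step is to recall that $\mathcal{T}_{\mathcal{H},\infty}$ is by definition the \emph{maximal} subset of $\mathcal{H}_{\infty}$ closed under $\varphi\mapsto \varphi\times{\rm id}_{\mathbb{C}}$ and satisfying the tangent-cone-at-infinity preservation property of item (ii) in Definition \ref{def_tg_hom_infinity}. Thus it suffices to verify that $Lip_{n,\infty}$ has both of these closure properties, and then maximality forces $Lip_{n,\infty}\subset \mathcal{T}_{\mathcal{H},\infty}$.

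For the first property one observes that if $\varphi$ is bi-Lipschitz at infinity then so is $\varphi\times{\rm id}_{\mathbb{C}}$, with the same (bi-)Lipschitz constants; this is routine. For the second property, i.e.\ that a bi-Lipschitz homeomorphism at infinity sending $V(f)$ onto $V(g)$ (outside a compact set) induces a homeomorphism $C_{\infty}(V(f))\to C_{\infty}(V(g))$, one invokes Theorem 4.5 of \cite{FernandesS:2018}, which says precisely that the tangent cone at infinity is an invariant of the bi-Lipschitz geometry at infinity of an algebraic set. Taken together, these two facts give $Lip_{n,\infty}\subset \mathcal{T}_{\mathcal{H},\infty}$, which is Example \ref{exam_lip}.

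With this inclusion in hand, the proof is immediate: given $\varphi\in Lip_{n,\infty}$ with $f=g\circ\varphi$ outside a compact subset, we have $\varphi\in \mathcal{T}_{\mathcal{H},\infty}$, and Corollary \ref{right_infinty} applies verbatim to conclude ${\rm deg}(f)={\rm deg}(g)$. I do not expect any genuine obstacle here: all of the substantive work has already been carried out upstream — on one hand the suspension construction $\tilde f(x,t)=f(x)+t^k$, $\tilde g(x,t)=g(x)+t^k$ together with Prill's Theorem \cite{Prill:1967} that powers up the degree discrepancy into a forbidden homeomorphism between a non-linear hypersurface cone and a hyperplane (the proof of Corollary \ref{right_infinty}), and on the other hand the bi-Lipschitz invariance of the tangent cone at infinity (\cite{FernandesS:2018}). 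The present corollary is just the composition of these two inputs, recovering a special case of Corollary 3.15 of \cite{Sampaio:2019}.
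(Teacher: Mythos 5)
Your proposal is correct and follows exactly the route the paper takes: the paper derives this corollary by combining Example \ref{exam_lip} (the inclusion $Lip_{n,\infty}\subset \mathcal{T}_{\mathcal{H},\infty}$, justified via Theorem 4.5 of \cite{FernandesS:2018} and the maximality in Definition \ref{def_tg_hom_infinity}) with Corollary \ref{right_infinty}. Nothing further is needed.
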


\section{Homeomorphism between two Nash Modifications}\label{sec:Nash}
In this Section we show that Question B$_{Nash}$ has a negative answer when we consider real analytic sets. 
In order to do this, we need of some preliminaries.

Let $X\subset\mathbb{R}^m$ be a path connected subset. Let us consider the following distance on $X$:  given two points $x_1,x_2\in X$, $d_X(x_1,x_2)$  is the infimum of the lengths of paths on $X$ connecting $x_1$ to $x_2$. Let us observe that
$$ \| x_1 - x_2 \| \leq d_X(x_1,x_2), \quad \forall \ x_1,x_2\in X.$$

\begin{definition}\label{definition-ne}
A subset $X\subset\mathbb{R}^m$ is called \emph{{\bf Lipschitz normally embedded (LNE)}} if there exists a constant $k\geq 1$ such that
$$d_X(x_1 , x_2) \leq  k \| x_1 - x_2 \|, \quad \forall \ x_1,x_2\in X.$$
\end{definition}
\begin{proposition}\label{ne_cone}
Let $L\subset\mathbb{S}^k$ be a set and ${\rm Cone}(L):=\{tx;\, x\in L$ and $t\in [0,+\infty )\}$. If $L$ is a LNE set then ${\rm Cone}(L)$ is a LNE set, as well.
\end{proposition}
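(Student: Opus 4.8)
The plan is to reduce the normal‑embedding inequality on $\mathrm{Cone}(L)$ to the corresponding inequality on $L$ by a scaling argument. Given two points $p_1=t_1x_1$ and $p_2=t_2x_2$ in $\mathrm{Cone}(L)$ with $x_i\in L$ and $t_i\ge 0$, I want to produce a path inside $\mathrm{Cone}(L)$ from $p_1$ to $p_2$ whose length is bounded by a constant (depending only on the LNE constant $k$ of $L$) times $\|p_1-p_2\|$. Without loss of generality assume $t_1\le t_2$. The natural candidate path is a concatenation of two pieces: first the radial segment from $t_1x_1$ to $t_2x_1$ (which lies in $\mathrm{Cone}(L)$ since $x_1\in L$), of length $|t_2-t_1|=t_2-t_1$; then the "dilated copy" of a near‑geodesic of $L$ joining $x_1$ to $x_2$, scaled by the factor $t_2$, which lies in $\mathrm{Cone}(L)$ and has length at most $t_2\,(d_L(x_1,x_2)+\varepsilon)\le t_2(k\|x_1-x_2\|+\varepsilon)$ using the LNE hypothesis on $L$. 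Letting $\varepsilon\to 0$, we get
$$
d_{\mathrm{Cone}(L)}(p_1,p_2)\ \le\ (t_2-t_1)+k\,t_2\,\|x_1-x_2\|.
$$

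The core of the argument is then the elementary estimate showing that the right‑hand side is $\lesssim \|p_1-p_2\|=\|t_1x_1-t_2x_2\|$. First, $t_2-t_1=\bigl|\,\|p_2\|-\|p_1\|\,\bigr|\le\|p_1-p_2\|$ by the reverse triangle inequality, since $\|x_i\|=1$. Second, I would bound $t_2\|x_1-x_2\|$: writing $t_2 x_2 - t_2 x_1 = (t_2x_2-t_1x_1)-(t_2-t_1)x_1$ gives $t_2\|x_1-x_2\|=\|t_2x_2-t_2x_1\|\le \|p_1-p_2\|+|t_2-t_1|\le 2\|p_1-p_2\|$. Combining, $d_{\mathrm{Cone}(L)}(p_1,p_2)\le (1+2k)\|p_1-p_2\|$, so $\mathrm{Cone}(L)$ is LNE with constant $1+2k$. (One should also dispose of the trivial degenerate cases, e.g.\ $t_2=0$ forces $t_1=0$ and both points are the cone point $0$; and one checks $0\in\mathrm{Cone}(L)$ and that $\mathrm{Cone}(L)$ is path connected as required for $d_{\mathrm{Cone}(L)}$ to make sense — this follows since each $tx$ is joined to $0$ by a radial segment.)

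I expect the only genuinely delicate point to be the handling of the inner metric $d_L$: strictly speaking $L$ need not be closed or compact, so a geodesic realizing $d_L(x_1,x_2)$ may not exist, which is why I work with near‑geodesics of length $d_L(x_1,x_2)+\varepsilon$ and pass to the limit. One must also confirm that scaling a path in $L$ by the constant $t_2$ multiplies its length by exactly $t_2$ (immediate from the definition of arc length) and that the scaled path genuinely stays in $\mathrm{Cone}(L)$ (each point $t_2 y$ with $y$ on a path in $L\subset\mathbb{S}^k$ satisfies $y\in L$, hence $t_2y\in\mathrm{Cone}(L)$). Everything else is the routine triangle‑inequality bookkeeping sketched above.
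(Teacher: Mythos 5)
Your proof is correct and follows essentially the same strategy as the paper: take a near-geodesic in $L$ between the two radial projections, lift it into the cone, and control the total length by the reverse triangle inequality together with the bound $t_2\|x_1-x_2\|\le 2\|p_1-p_2\|$, arriving at the same constant $2k+1$. The only (cosmetic) difference is that you concatenate a radial segment with a scaled copy of the near-geodesic, whereas the paper uses the single path $\gamma(t)=(t\|y\|+(1-t)\|x\|)\alpha(t)$ that interpolates radius and angle simultaneously and estimates its length via the product rule.
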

Let us remark that Proposition \ref{ne_cone} already was proven in (\cite{KernerPR:2018}, Proposition 2.8 (a)). However, in his thesis (\cite{Sampaio:2015}, Example 4.3.7), the author of this paper proved Proposition \ref{ne_cone} for the case of a surface in $\mathbb{R}^3$ with isolated singularity and as it was remarked in \cite{KernerPR:2018}, it is possible to generalize the proof in \cite{Sampaio:2015} to arbitrary dimensions and, thus, we present this generalization below. 
\begin{proof}[Proof of Proposition \ref{ne_cone}]
Since $L$ is LNE, there exists a constant $k\geq 1$ such that
$$d_L(x_1 , x_2) \leq  k \| x_1 - x_2 \|, \quad \forall \ x_1,x_2\in L.$$
It is enough to show that there exists a positive constant $C$ such that $d_V(x,y)\leq C\|x-y\|$ for all $x,y\in V$.
Given $x,y\in V$, we can suppose that $\|y\|\geq \|x\|$. If $x=0$, then $d_V(0,y)=\|0-y\|=\|y\|$, since $V$ is a real cone with vertex at origin. If $x\not=0$ and again using the fact that $V$ is a real cone with vertex at origin, we have $\frac{x}{\|x\|},\frac{y}{\|y\|}\in L$. Thus, given $\varepsilon>0$, there exists a Lipschitz curve $\alpha\colon [0,1]\to L$ such that $\alpha(0)=\frac{x}{\|x\|}$, $\alpha(1)=\frac{y}{\|y\|}$  and 
$$ length(\alpha)=\int_0^1\|\alpha'(t)\|dt \leq d_L(\textstyle{\frac{x}{\|x\|},\frac{y}{\|y\|})+\varepsilon\left\|\frac{x}{\|x\|}-\frac{y}{\|y\|}\right\|\leq (k+\varepsilon)\left\|\frac{x}{\|x\|}-\frac{y}{\|y\|}\right\|}.$$ 
Then, we define $\gamma\colon [0,1]\to V$ given by $\gamma(t)=(t\|y\|+(1-t)\|x\|)\alpha(t)$. It is clear that $\gamma$ is a Lipschitz curve satisfying $\gamma(0)=x$ and $\gamma(1)=y$. Hence,
\begin{eqnarray*}
 d_V(x,y) & \leq & length(\gamma) = \int_0^1\|\gamma'(t)\|dt\\
					    & = & \int_0^1\|(\|y\|-\|x\|)\alpha(t)+(t\|y\|+(1-t)\|x\|)\alpha'(t)\|dt\\
					    & \leq & \int_0^1\big|\|y\|-\|x\|\big|\cdot\|\alpha(t)\|dt+\int_0^1(t\|y\|+(1-t)\|x\|)\|\alpha'(t)\|dt\\
					    & \leq & \big|\|y\|-\|x\|\big|+\|y\|\int_0^1\|\alpha'(t)\|dt \mbox{, since } \|y\|\geq \|x\|\\
					    & \leq & \|x-y\|+\|y\|(k+\varepsilon)\left\|\textstyle \frac{x}{\|x\|}-\frac{y}{\|y\|}\right\|\\
					    & \leq & \|x-y\|+(k+\varepsilon)\big\|\textstyle \frac{\|y\|}{\|x\|}x-y\big\|\\
					    & \leq & \|x-y\|+(k+\varepsilon)\left\|\textstyle \|y\|\frac{x}{\|x\|}-\|x\|\frac{x}{\|x\|}\right\|+(k+\varepsilon)\| x-y\|\\
					    & = & (k+\varepsilon)\big|\|y\|-\|x\|\big|+(k+\varepsilon+1)\| x-y\|\\
					    & \leq & (2(k+\varepsilon)+1)\| x-y\|.
\end{eqnarray*}
Therefore, by taking the limit as $\varepsilon\to 0$, we obtain that $d_V(x,y)\leq C\| x-y\|$ for all $x,y\in V$, where $C=2k+1$.
\end{proof}
\begin{definition}\label{inner_lipschitz function}
Let $X\subset\mathbb{R}^n$ and $Y\subset\mathbb{R}^m$. A mapping $f\colon X\rightarrow Y$ is called {\bf inner Lipschitz} if there exists $\lambda >0$ such that is
$$d_Y(f(x_1),f(x_2))\le \lambda d_X(x_1,x_2), \quad \mbox{for all }x_1,x_2\in X.$$ A Lipschitz mapping $f\colon X\rightarrow Y$ is called
{\bf inner bi-Lipschitz} if its inverse mapping exists and is inner Lipschitz. When there exists an inner bi-Lipschitz mapping $f\colon X\rightarrow Y$ we say that $X$ and $Y$ are inner bi-Lipschitz homeomorphic.
\end{definition}

\begin{proof}[Proof of Proposition \ref{ex_Nash}]
Let us consider $V=\{(x,y,z)\in\mathbb{R}^3;\, z^{2019}=x^{2019}+y^{2019}\}$. 
Since the link of $V$, $L:=V\cap \mathbb{S}^2$, is a smooth submanifold of $\mathbb{R}^3$, connected and compact, then $L$ is LNE. By Proposition \ref{ne_cone}, $V$ is LNE.

Therefore, by Theorem 8.3 in \cite{Birbrair:2008}, there exists $\beta\geq 1$ such that $(V,0)$ and $(X_{\beta},0)$ are inner bi-Lipschitz homeomorphic, where $X_{\beta}=\{(x,y,z)\in\mathbb{R}^3;\, x^2+y^2=z^{2\beta}$ and $z\geq 0\}$. Since $(V,0)$ and $(X_{\beta},0)$ are LNE sets, then $(V,0)$ and $(X_{\beta},0)$ are also bi-Lipschitz homeomorphic. In addiction, $X_{\beta}$ has zero density at the origin if and only if $\beta> 1$. However, the tangent cone of $V$ at origin is itself and, in particular, $V$ does not have zero density at the origin, then $\beta=1$, since an inner bi-Lipschitz homeomorphism preserves zero density (see Proposition 2.3 in \cite{BirbrairFN:2010} and Th\'eor\`eme 3.8 in \cite{KurdykaR:1989}). Moreover, $(X_{1},0)$ and $(\mathbb{R}^2,0)$ are bi-Lipschitz homeomorphic, since $X_{1}$ is the graph of the Lipschitz mapping $f\colon (\mathbb{R}^2,0)\to (\mathbb{R},0)$ given by $f(x,y)=(x^2+y^2)^{\frac{1}{2}}$.  Therefore, $(V,0)$ and $(\mathbb{R}^2,0)$ are bi-Lipschitz homeomorphic, as well.

Now, it is easy to see that there is no homeomorphism from $V'_{Nash}$ to ${\mathbb{R}^2}'_{Nash}$ sending $\eta_V^{-1}({\rm Sing}(V))$ over $\eta_{\mathbb{R}^2}^{-1}({\rm Sing}(\mathbb{R}^2))$, since $\eta_V^{-1}({\rm Sing}(V))\not = \emptyset$.
\end{proof}
In particular, the above proposition says that to give a positive answer to Question B$_{Nash}$ in the case of complex analytic sets, we really need to use the complex structure of the sets.

\noindent {\bf Acknowledgements.}
The author would like to thank Javier Fern\'andez de Bobadilla for telling how to get a negative answer to Question B$_{Rel}$ and Alexandre Fernandes for the discussions about Question B$_{Nash}$. The author would also like to thank the anonymous referee for his useful comments.



\end{document}